\documentclass[12pt, a4paper, parskip=half, abstracton, bibliography=totoc]{scrartcl}
\pdfoutput=1

\usepackage{array}
\usepackage{marginnote}
\usepackage{xcolor}
\usepackage{amscd,amssymb,amsfonts,amsmath,latexsym,amsthm}
\usepackage{hyperref}
\usepackage[all,cmtip]{xy}
\textheight23cm
\textwidth16cm
\usepackage{mathrsfs}
\oddsidemargin0.0cm
\topmargin-1.0cm
\footskip2.0cm
\setlength\parindent{0pt}
\usepackage{graphicx}
\usepackage{bm} 
\usepackage{mathtools} 


\usepackage{etoolbox}


\let\counterwithout\relax

\usepackage{chngcntr} 

\setcounter{tocdepth}{2} 

\usepackage{defs_pp1}
\usepackage{slashed}
\usepackage[utf8]{inputenc}
\usepackage{microtype}
\usepackage[english]{babel}

\title{Coarse cohomology theories}
\author{
Ulrich Bunke\thanks{Fakult{\"a}t f{\"u}r Mathematik,
Universit{\"a}t Regensburg,
93040 Regensburg,
GERMANY\newline
ulrich.bunke@mathematik.uni-regensburg.de} 
\and
Alexander Engel\thanks{Institut f{\"u}r Mathematik und Informatik der Universit{\"a}t Greifswald\newline
Walther-Rathenau-Str.\,47 in 17489 Greifswald\newline
alexander.engel@uni-greifswald.de}
}

\numberwithin{equation}{section}
\setcounter{secnumdepth}{3}
\counterwithout{footnote}{section}

\newtheorem{theorem}{Theorem}[section] 
\newtheorem{prop}[theorem]{Proposition}
\newtheorem{lem}[theorem]{Lemma}

\newtheorem{ddd}[theorem]{Definition}
\newtheorem{kor}[theorem]{Corollary}

\theoremstyle{remark}
\theoremstyle{definition}
\newtheorem{ex-alt}[theorem]{Example}
\newtheorem{rem-alt}[theorem]{Remark}

\newenvironment{ex}
{%
	\pushQED{\qed}\begin{ex-alt}}
	{\popQED\end{ex-alt}}

\newenvironment{rem}
{%
	\pushQED{\qed}\begin{rem-alt}}
	{\popQED\end{rem-alt}}

\newcommand{\deff}{\mathrm{def}}
\newcommand{\wfl}{\mathrm{wfl}}

\newcommand{\All}{\mathbf{All}}

\newcommand{\UBC}{\mathbf{UBC}}

\newcommand{\Rips}{\mathrm{Rips}}
\newcommand{\Coind}{\mathrm{Coind}}

\newcommand{\Yo}{\mathrm{Yo}}

\newcommand{\Res}{\mathrm{Res}}

\newcommand{\Orb}{\mathbf{Orb}}

\newcommand{\BC}{\mathbf{BornCoarse}}

\newcommand{\Fin}{\mathbf{Fin}}

\newcommand{\Cofib}{\mathrm{Cofib}}

\newcommand{\Fib}{{\mathrm{Fib}}}

\newcommand{\bF}{{\mathbf{F}}}

\newcommand{\cO}{{\mathcal{O}}}

\newcommand{\cY}{{\mathcal{Y}}}

 \newcommand{\Cone}{{\mathtt{Cone}}}

\newcommand{\bP}{\mathbf{P}}

\newcommand{\HX}{H\!\mathcal{X}}
\newcommand{\CX}{C\!\cX}

\newcommand{\Spc}{\mathbf{Spc}}

\newcommand{\IZ}{\mathbb{Z}}

\newcommand{\HAX}{H\! A\cX}

\newcommand{\hlg}{\mathrm{hlg}}
\newcommand{\free}{\mathrm{free}}
\newcommand{\op}{\mathrm{op}}
\newcommand{\BG}{\mathbf{B}G}

\begin{document}

\maketitle

\begin{abstract}
We propose the notion of a coarse cohomology theory and study the examples of coarse ordinary cohomology, coarse stable cohomotopy and of coarse cohomology theories obtained by dualizing coarse homology theories.

We show that the dualizing spectrum of a finitely generated torsion-free group only depends on the coarse motivic spectrum represented by the underlying bornological coarse space of the group. This in particular implies a conjecture  of J.\,R.\,Klein that the dualizing spectrum of a group is a coarse invariant.

\end{abstract}

\tableofcontents

\section{Introduction}

In \cite{buen} we have introduced the category $\BC$ of bornological coarse spaces and the notion of a $\bC$-valued coarse homology theory  $E\colon \BC\to \bC$, where $\bC$ is a   stable  $\infty$-category.   In the   present paper we place the notion of a coarse cohomology theory into the same framework.

Let  $\bD$ be a  stable  $\infty$-category and $E\colon \BC^{\op}\to \bD$ be a functor.
\begin{ddd}\label{weijgowefwerfer}
We call $E$  a coarse cohomology theory if $E^{\op} \colon \BC\to \bD^{\op}$ is a coarse homology theory. 
\end{ddd}

The main purpose of the present paper is to present the construction of the following   three examples of coarse cohomology theories:
\begin{enumerate}
\item To every abelian group $A$ we associate the coarse ordinary cohomology theory
$$\HAX\colon \BC^{\op}\to \Ch_{\infty}\, .$$
Our Definition \ref{ewkfhweiofwefewf} extends the original definition of Roe \cite{roe_coarse_cohomology} to the context of bornological coarse spaces, see {Lemma~\ref{lem23435tzrgewrt}.} 
 \item  \label{ergiooergregeg1}  If $\bC$   is a   bicomplete stable $\infty$-category tensored over $\Sp$, then in Definition~\ref{wrowpt32432424234234} we associate  to every object
 $C$ of $\bC$   a  $\bC$-valued coarse cohomology theory $Q_{C}$. For the category of spectra $\bC=\Sp$ and  for the sphere spectrum $C=S$ we obtain a coarse version $Q_{S}$ of stable cohomotopy.
  \item \label{ergiooergregeg} If $E$ is a $\bC$-valued coarse homology theory and $C$ is an object of $\bC$, then in the Definition \ref{rgoirjgoiergergreg} we define the dual $\Sp$-valued coarse cohomology theory $D_{C}(E)$ by forming the mapping spectrum with target $C$. We also consider versions of this construction where we replace the mapping space functor by some internal mapping object functor or a suitable power functor.  
\end{enumerate}

{Note that there are also other approaches to a general framework for coarse cohomology theories. Let us mention exemplary the work of Schmidt \cite{schmidt},  Hartmann \cite{hartmann}, and Wulff \cite{Wulff:2020vh}.}

In Section \ref{regoiegpergergreg} we provide an application of the coarse cohomology $Q_{S}$ discussed above in  Item \ref{ergiooergregeg1}.  
In order to formulate our result we recall the following:

\begin{enumerate}
\item Let $G$ be a discrete   group and define   the dualizing spectrum $\underline{D}_{G} \coloneqq S[G]^{G}$.  Here  $S[G]$ is the suspension spectrum associated to the  underlying  discrete  space of $G$. The right-action of $G$ on itself induces an action   on $S[G]$, and $S[G]^{G}$ denotes the spectrum of   fixed points.

\item In \cite[Def.\,4.3]{buen} we constructed a universal coarse homology theory $$\Yo^{s}\colon\BC\to \Sp\cX$$ with values in the stable $\infty$-category  of coarse motivic spectra.
\item By   \cite[Ex.\,2.21]{buen}  the  group $G$ gives naturally rise to a bornological coarse space $G_{can,min}$  in $\BC$  and therefore to a coarse motivic spectrum $\Yo^{s}(G_{can,min})$ in $\Sp\cX$. Note that $\Yo^{s}(G_{can,min})$ is in particular an invariant of the quasi-isometry class of $G$.
\end{enumerate}
 \begin{theorem}[Theorem \ref{ergoiergregege}]
If $G$ is  {finitely generated and} torsion-free, then we have an equivalence
$\underline{D}_{G}\simeq Q_{S}(G_{can,min})$ in $\Sp$. 
\end{theorem}

 The following  corollary settles a generalization of a conjecture stated by Klein \cite[Conj.~on Page~455]{klein}.
\begin{kor}[Corollary~\ref{rwfgiowjgogrgergeg}]\label{thm243erdwe}
 For a finitely generated and  torsion-free group $G$
the spectrum $\underline{D}_{G}$ only depends on the coarse motivic spectrum $\Yo^{s}(G_{can,min})$.
In particular, it is an invariant of the quasi-isometry class.
\end{kor}


\begin{rem} 
The $\Z$-linear analogues of the above results have been settled   by Roe as we explain in the following.

Recall that a group $G$ 
is of the type $\mathit{FP}$ if  the trivial $G$-module $\Z$ admits a finite-length projective resolution by finitely generated $\Z[G]$-modules. We let $n$ be the cohomological dimension of $G$ and   define the dualizing $G$-module by $D_{G}^{\Z,n} \coloneqq H^n(G;\IZ[G])$. 
Here we consider $\Z[G]$ as a $G$-module with the action induced by the right multiplication on $G$, and the $G$-action on the cohomology is induced from the left $G$-action on $\Z[G]$.  

By $\underline{D}_{G}^{\Z} \coloneqq  H(G;\IZ[G])$ we denote the cohomology complex
of $G$ with coefficients in the module $\Z[G]$. We consider it as an object of the stable $\infty$-category $\Ch_{\infty}$ of chain complexes in $\Ab$ with quasi-isomorphisms inverted. 
The chain complex $\underline{D}_{G}^{\Z}$ is the $\Z$-linear analogue of $\underline{D}_{G}$ defined above. 

 The group $G$ is  called a Bieri--Eckmann duality group, if it is of type $\mathit{FP}$ and    $\underline{D}_{G}^{\Z}\simeq   D_{G}^{\Z,n}[-n]$ (with $G$-action forgotten), i.e., $\underline{D}_{G}^{\Z}$ has only one non-trivial cohomology group which sits in degree $n$ and is isomorphic to $D_{G}^{\Z,n}$. If in addition 
the underlying abelian group of  $D_{G}^{\Z,n}$ is torsion-free,  then the four assertions stated in \cite[Thm.\,VIII.10.1]{brown} are satisfied. In particular, the group~$G$ satisfies a version of Poincar\'{e} duality in the sense that there are natural isomorphisms $H^i(G;-) \cong H_{n-i}(G;D_G^{\Z,n} \otimes -)$ of functors on the category of $G$-modules. 

 A Bieri--Eckmann duality group with the property 
  that $D_G^{\Z,n}$ is torsion-free  is called a Poincar\'{e} duality group.

It is known from the work of Roe \cite[Prop.\,2.6]{roe_index_coarse} (see also \cite[Thm.\,8]{gersten}) that the cohomology groups  $H^{*}(G;\IZ[G])$  are invariants of $\Yo^{s}(G_{can,min})$    because they coincide with the coarse ordinary cohomology groups of $G_{can,min}$. Thus if $G$ is of type FP, then the property of being a Bieri--Eckmann  or being a  Poincar\'{e} duality group
only depends on the equivalence class of $\Yo^{s}(G_{can,min})$.
 \end{rem}

\paragraph{Acknowledgements}
\textit{The authors were supported by the SFB 1085 ``Higher Invariants'' funded by the Deutsche Forschungsgemeinschaft DFG.}

\section{Coarse cohomology theories}
\label{sec5645356435}

In Section \ref{sec2345t2345} we give an axiomatic  definition of the notion of a coarse cohomology theory which  by the Corollary \ref{kor2435tre} it is eventually equivalent to Definition \ref{weijgowefwerfer}. 
In Section \ref{sec124354trew} we construct first examples by dualizing coarse homology theories.
  In Section \ref{wgowriepgwegweere} we discuss coassembly maps for coarse cohomology theories. 

\subsection{{Definition and basic properties}}
\label{sec2345t2345}

In this section we define coarse cohomology theories by dualizing the axioms for coarse homology theories given in \cite[Sec.\,4.4]{buen}. 

We will use notions for bornological coarse spaces and coarse homology theories introduced in \cite{buen}. In particular, we  assume familiarity of the reader with the material in \cite[Sec.\,2--4]{buen}.

 Let $\bC$ be a complete stable $\infty$-category.

We equip the two-point set $\{0,1\}$ with the maximal bornological and coarse structures.
The category $\BC$ has a symmetric monoidal structure $\otimes$ introduced in  \cite[Ex.\,2.32]{buen}. 
Since $\{0,1\}$ is bounded, 
for every bornological coarse space $X$ the projection $\{0,1\}\otimes X\to X$ is a morphism of bornological coarse spaces.

We consider a functor $E\colon \BC^{\op}\to \bC$.
\begin{ddd}
$E$ is called coarsely invariant if for every bornological coarse space $X$ the projection $\{0,1\}\otimes X\to X$ induces an equivalence
$E(X)\to E(\{0,1\}\otimes X)$.
\end{ddd}

Let $\cY:=(Y_{i})_{i\in I}$ be a big family on a bornological coarse space $X$ \cite[Def.\,3.2]{buen}. Set
\[E(\cY) \coloneqq \lim_{i\in I} E(Y_{i})\]
and note that have a natural morphism
\[E(X)\to E(\cY)\,.\]
The definition of excisiveness involves the notion of complementary pairs \cite[Def.\,3.5]{buen}.
\begin{ddd}
$E$ is called excisive if for any complementary pair $(\cY,Z)$   on a bornological coarse space $X$ the square
$$\xymatrix{E(X)\ar[r]\ar[d]&E(Z)\ar[d]\\E(\cY)\ar[r]&E(Z\cap \cY)}$$
is cartesian.
\end{ddd}

Recall the notion of a flasque bornological coarse space \cite[Def.~3.21]{buen}.
\begin{ddd}
$E$ vanishes on flasques if $E(X)\simeq 0$ for every flasque bornological coarse space $X$. 
\end{ddd}

In the following let $\cC$ denote the coarse structure of a bornological coarse space $X$.
For an entourage $U$ in $\cC$ we let $X_{U}$ be the bornological coarse space obtained from $X$ by replacing the coarse structure $\cC$ by the coarse structure $\cC\langle U\rangle$ generated by $U$. The identity map of the underlying set of $X$ is a morphism $X_{U}\to X$ of bornological coarse spaces. We get a natural morphism $$E(X)\to \lim_{U\in \cC}E(X_{U})\,.$$

\begin{ddd}
$E$ is $u$-continuous if for every bornological coarse space $X$ the natural morphism  
$$E(X)\to \lim_{U\in \cC} E(X_{U})$$ is an  equivalence.
\end{ddd}

Let $\bC$ be a complete stable $\infty$-category and consider a functor $E \colon \BC^{\op}\to \bC$.

\begin{ddd}\label{wefoijoi24r34tt}
$E$ is a coarse  cohomology theory if it has the following properties:
\begin{enumerate}
\item\label{fwiofjewofwfewfef3} $E$ is coarsely invariant.
\item\label{fwiofjewofwfewfef} $E$ is excisive.
\item $E$ vanishes on flasques.
\item\label{fwiofjewofwfewfef1} $E$ is $u$-continuous.
\end{enumerate}
\end{ddd}

\begin{rem}
In this remark we compare Definition \ref{wefoijoi24r34tt} with {Fukaya and Oguni's} definition of a coarse cohomology theory \cite[Def.~3.3]{Fukaya:2013aa}.

As usual in the {current}  coarse geometry literature they only consider proper metric spaces. In order to encode coarse invariance they introduce the coarse category: It is  obtained from the full subcategory of $\BC$ of proper metric spaces (where the coarse and bornological structures are induced from the metric) by identifying morphisms which are close to each other. Then a coarse cohomology theory {in the sense of Fukaya and Oguni is a} contravariant, $\Z$-graded, group-valued functor on the coarse category which vanishes on all spaces of the form $X \otimes {\nat_{can}}$ (where ${\nat_{can}}$ has the canonical metric structures) and satisfies a Mayer--Vietoris sequence for coarsely excisive decompositions.

If $E$ is an $\Sp$-valued  coarse  cohomology theory as in Definition \ref{wefoijoi24r34tt}, then we can derive a
coarse cohomology theory in the sense of {Fukaya--Oguni} by  taking homotopy groups and restricting to proper metric spaces. 
Condition \ref{wefoijoi24r34tt}.\ref{fwiofjewofwfewfef3} ensures that the resulting $\Z$-graded group-valued functor factorizes over the coarse category.
The excisiveness Condition  \ref{wefoijoi24r34tt}.\ref{fwiofjewofwfewfef} is stronger than
{satisfying a Mayer--Vietoris sequence for coarsely excisive decompositions, cf.~\cite[Lem.~3.38]{buen}.}
Finally, the Condition \ref{wefoijoi24r34tt}.\ref{fwiofjewofwfewfef1} is actually equivalent to the requirement that  $E(X \otimes \nat_{can})\simeq 0$, and $u$-continuity is not part of Fukaya and Oguni's axioms.
\end{rem}

If the $\infty$-category $\bC$ is complete and stable, then the opposite $\infty$-category $\bC^{\op}$ is cocomplete and stable.
If $E \colon \BC^{\op}\to \bC$ is a functor, then  we let $E^{\op} \colon \BC\to \bC^{\op}$ denote the induced functor between  the opposite categories.

Recall the definition of a   $\bC^{\op}$-valued coarse homology theory from \cite[Def.~4.22]{buen}. The following corollary immediately follows from a comparison of the definitions.

\begin{kor}\label{kor2435tre}
$E$ is a  $\bC$-valued coarse cohomology theory if and only if $E^{\op}$ is a $\bC^{\op}$-valued coarse homology theory.
\end{kor}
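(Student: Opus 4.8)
The plan is to unwind both definitions and observe that they are literally dual to each other, axiom by axiom. Concretely, I would fix a functor $E\colon \BC^{op}\to \bC$ with $\bC$ complete stable, so that $\bC^{op}$ is cocomplete stable and $E^{op}\colon \BC\to \bC^{op}$ is a well-defined functor of $\infty$-categories. The key formal fact I would invoke repeatedly is that a square in $\bC$ is cartesian if and only if the opposite square in $\bC^{op}$ is cocartesian, and that $\lim$ in $\bC$ computed over a diagram equals $\colim$ in $\bC^{op}$ over the opposite diagram; in a stable $\infty$-category cartesian and cocartesian squares coincide, which will make the excision comparison especially clean.

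The main body of the argument is then a four-point checklist matching Definition~\ref{wefoijoi24r34tt} against \cite[Def.~4.5]{buen}. First, coarse invariance: the projection $\{0,1\}\otimes X\to X$ induces $E(X)\to E(\{0,1\}\otimes X)$, which is an equivalence iff the opposite morphism $E^{op}(\{0,1\}\otimes X)\to E^{op}(X)$ is an equivalence, i.e.\ iff $E^{op}$ is coarsely invariant in the homological sense. Second, excision: for a complementary pair $(\cY,Z)$ the excision square for $E$ is the square displayed in the definition above; applying $(-)^{op}$ turns it into the Mayer--Vietoris pushout square for $E^{op}$, using that $E^{op}(\cY)=\colim_i E^{op}(Y_i)$ matches $E(\cY)=\lim_i E(Y_i)$, and cartesian $\leftrightarrow$ cocartesian under $op$. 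Third, vanishing on flasques: $E(X)\simeq 0$ iff $E^{op}(X)\simeq 0$, since a zero object of $\bC$ is a zero object of $\bC^{op}$. Fourth, $u$-continuity: the morphism $E(X)\to \lim_{U\in\cC}E(X_U)$ dualizes to $\colim_{U\in\cC}E^{op}(X_U)\to E^{op}(X)$, which is the $u$-continuity map for $E^{op}$, and one is an equivalence iff the other is. Assembling these four equivalences of conditions yields the stated biconditional.

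The only genuinely delicate point, and the one I would be most careful about, is bookkeeping the identification $E(\cY)^{op}\simeq E^{op}(\cY)$: one must check that the natural morphism $E(X)\to E(\cY)$ and the structure maps of the limit really become, under $(-)^{op}$, the colimit structure maps and the canonical map $E^{op}(\cY)\to E^{op}(X)$ used in \cite{buen}, rather than merely being abstractly isomorphic. This is routine once one notes that $\BC^{op}$ and the indexing categories of big families are unchanged, and that passing to opposites is functorial and commutes with forming (co)limits in the appropriate variance; no real obstacle remains. I would then simply remark that everything above is an immediate comparison of the two lists of axioms, which is exactly what the corollary claims, and conclude with \hB.
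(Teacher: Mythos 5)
Your proposal is correct and is exactly the argument the paper has in mind: the paper states that the corollary ``immediately follows from a comparison of the definitions,'' and your axiom-by-axiom checklist (coarse invariance, excision via cartesian/cocartesian duality, vanishing on flasques, $u$-continuity via $\lim$/$\colim$ duality) is precisely that comparison spelled out. No further comment is needed.
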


Using the correspondence between coarse homology theories and coarse cohomology theories we can transfer the results and definitions concerning coarse homology theories shown or stated in{, e.g.\ }\cite{buen} and \cite{ass} to the case of coarse cohomology theories. Here {are two examples} of such a transfer of definitions.

Recall the notion of a weakly flasque bornological coarse space \cite[Def.~4.18]{equicoarse}.
\begin{ddd}
A coarse cohomology theory $E$ is called strong if $E(X)\simeq 0$ for every weakly flasque bornological coarse space $X$.
\end{ddd}
Thus a coarse cohomology theory $E$    is strong if and only if $E^{\op}$ is a strong coarse homology theory.

Recall from \cite[Def.~6.3]{buen} that a coarse homology theory $F$ is called strongly additive if for every family $(X_{i})_{i\in I}$ of bornological coarse spaces we have an equivalence
$$F\big(\bigsqcup_{i\in I}^\free X_{i}\big) \simeq \prod_{i\in I} F(X_{i})$$
(induced by the collection of projection maps which exist by excision). Hence for coarse cohomology theories we get the following definition of {(strong)} additivity:

\begin{ddd}\label{defn_additivity}
A coarse cohomology theory $E$ is called strongly additive if for every   family $(X_{i})_{i\in I}$ of bornological coarse spaces we have an equivalence
\[\bigoplus_{i\in I} E(X_{i}) \simeq E\big(\bigsqcup^\free_{i\in I} X_{i}\big)\]
induced by the natural map.

We say that $E$ is additive, if the equivalence above is satisfied for all families of one-point spaces.
\end{ddd}


In \cite[Def.~4.3]{buen} we have introduced the stable $\infty$-category of coarse motivic spectra $\Sp\cX$ and the universal classical coarse homology theory $$\Yo^{s} \colon \BC\to \Sp\cX\, .$$
{For a complete stable $\infty$-category $\bC$ precomposition with $\Yo^{s}$ induces by \cite[Cor.~4.6]{buen}} an equivalence between the $\infty$-categories of colimit preserving functors from $\Sp\cX$ to $\bC^{\op}$ and   $\bC^{\op}$-valued coarse homology theories. By Corollary~\ref{kor2435tre} we have the analogous statement for coarse cohomology theories, as follows.

Let $\bC$ be a complete stable $\infty$-category.

\begin{kor}\label{kor2435zrterwe}
Precomposition by $\Yo^{s,\op}$ induces an equivalence between the $\infty$-categories of limit-preserving functors 
$\Sp\cX^{\op}\to \bC$ and  $\bC$-valued coarse cohomology theories.
\end{kor}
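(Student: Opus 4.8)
The plan is to deduce Corollary~\ref{kor2435zrterwe} formally from the corresponding statement for coarse homology theories, using the duality functor $(-)^{op}$ and Corollary~\ref{kor2435tre}. Concretely, recall that by \cite[Cor.~4.6]{buen} precomposition with $\Yo^{s}$ gives an equivalence
$$\Fun^{\mathrm{colim}}(\Sp\cX,\bC^{op})\xrightarrow{\ \sim\ }\mathbf{CoarseHom}(\bC^{op})$$
between the $\infty$-category of colimit-preserving functors $\Sp\cX\to\bC^{op}$ and that of classical $\bC^{op}$-valued coarse homology theories. First I would apply the general fact that for any two $\infty$-categories $\cD,\cE$ the assignment $F\mapsto F^{op}$ is an equivalence $\Fun(\cD,\cE)^{op}\simeq\Fun(\cD^{op},\cE^{op})$ which carries colimit-preserving functors to limit-preserving functors (since $(-)^{op}$ interchanges colimits and limits). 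Applying this with $\cD=\Sp\cX$ and $\cE=\bC^{op}$ — and using that $(\bC^{op})^{op}=\bC$ and $(\Sp\cX)^{op}=\Sp\cX^{op}$ — identifies $\Fun^{\mathrm{colim}}(\Sp\cX,\bC^{op})^{op}$ with $\Fun^{\mathrm{lim}}(\Sp\cX^{op},\bC)$, the limit-preserving functors $\Sp\cX^{op}\to\bC$.

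Next I would track the precomposition map under this dualization. If $G\colon\Sp\cX\to\bC^{op}$ is colimit-preserving, then $G\circ\Yo^{s}$ is the associated coarse homology theory, and $(G\circ\Yo^{s})^{op}=G^{op}\circ\Yo^{s,op}$; so under the identifications above the equivalence of \cite[Cor.~4.6]{buen} transports to the statement that precomposition with $\Yo^{s,op}$ is an equivalence
$$\Fun^{\mathrm{lim}}(\Sp\cX^{op},\bC)\xrightarrow{\ \sim\ }\big(\mathbf{CoarseHom}(\bC^{op})\big)^{op}\ .$$
Finally, by Corollary~\ref{kor2435tre} the functor $E\mapsto E^{op}$ is an equivalence between classical $\bC$-valued coarse cohomology theories and classical $\bC^{op}$-valued coarse homology theories; passing to opposite categories, $\big(\mathbf{CoarseHom}(\bC^{op})\big)^{op}$ is equivalent to the $\infty$-category of classical $\bC$-valued coarse cohomology theories. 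Composing these equivalences yields exactly the asserted equivalence between limit-preserving functors $\Sp\cX^{op}\to\bC$ and classical $\bC$-valued coarse cohomology theories, and by construction it is implemented by precomposition with $\Yo^{s,op}$.

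The only genuinely nontrivial input is the $\infty$-categorical bookkeeping: one must be careful that "precomposition with $\Yo^{s}$" is a functor of $\infty$-categories (not merely a map on equivalence classes) and that it is compatible with the passage to opposite categories, i.e.\ that the square relating $\mathrm{Fun}(-,-)\to\mathrm{Fun}(-,-)$ before and after applying $(-)^{op}$ commutes coherently. This is a standard manipulation — it follows from the fact that $(-)^{op}\colon\Cat_\infty\to\Cat_\infty$ is an equivalence and from functoriality of the restriction functors $\Yo^{s,*}$ — but it is the step where the argument could go wrong if one is cavalier. Everything else (that $(-)^{op}$ swaps limits and colimits, hence swaps limit- and colimit-preserving functors; that it sends $\Sp\cX$ to $\Sp\cX^{op}$ and $\bC^{op}$ back to $\bC$) is immediate. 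Since the excerpt explicitly says the corollary follows from \cite[Cor.~4.6]{buen} together with Corollary~\ref{kor2435tre}, I expect the author's proof to be a one- or two-sentence remark along precisely these lines, and the main obstacle is simply stating it with enough care to be convincing rather than any real mathematical difficulty.
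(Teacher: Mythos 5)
Your proposal is correct and follows exactly the route the paper intends: the corollary is stated as an immediate consequence of the universal property of $\Yo^{s}$ from \cite[Cor.~4.6]{buen} together with Corollary~\ref{kor2435tre}, obtained by passing to opposite categories, which is precisely your argument. The paper gives no further details, so your more careful tracking of the precomposition map under $(-)^{op}$ is, if anything, more explicit than the original.
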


\subsection{Coarse cohomology theories by duality}
\label{sec124354trew}
 
In this section we provide a simple  construction of coarse cohomology theories by dualizing coarse homology theories.

 Let $\bC$ be a stable $\infty$-category and $C$ be an object of $\bC$. 
We assume one of the following:
\begin{enumerate}
\item\label{viowervwervwve1} 
$\bC$ is cocomplete and
   $F \colon \BC\to  \bC$ is a  coarse homology theory.  Then we use the notation  $$C^{(-)} \coloneqq \map(-,C) \colon \bC^{op}\to \Sp$$ for the mapping spectrum functor and set $\bD \coloneqq \Sp$.

 \item\label{viowervwervwveeee1}   $F \colon \BC\to \Sp$ is a   coarse homology theory.     In this case we assume that $\bC$ is complete and powered over $\Sp$. We write $$C^{(-)} \colon \Sp^{\op}\to \bC\, , \quad  A\mapsto C^{A}$$  
for the  power functor and set $\bD \coloneqq \bC$.
\item\label{rgioerp3g3g3} $\bC$ is complete and cocomplete and $F \colon \BC\to \bC$ is a  coarse homology theory. Furthermore,  $\bC$ is closed symmetric monoidal  and in particular admits a limit-preserving  internal
mapping object functor $$\underline{\map}(-,C) \colon \bC^{op}\to \bC\, .$$ In this case we write $C^{(-)} \coloneqq \underline{\map}(-,C)$ and set $\bD \coloneqq \bC$.
\end{enumerate}
 
 

\begin{ddd}\label{rgoirjgoiergergreg}
We define the functor
$$D_{C}(F) \colon \BC^{\op}\to \bD\, , \quad D_{C}(F) \coloneqq C^{(-)}\circ F^{\op}\, .$$
\end{ddd}

We consider $D_{C}(F)$ as the dual of $F$ with respect to $C$.
 
\begin{theorem}\label{wefoewpfewfewfwef}
$D_{C}(F)$ is a  $\bD$-valued coarse {cohomology} theory.
\end{theorem}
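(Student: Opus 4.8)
The plan is to verify the four axioms of Definition~\ref{wefoijoi24r34tt} for $D_{C}(E^{\hlg})=C^{(-)}\circ E^{\hlg,op}$ directly, by transporting the corresponding properties of the coarse homology theory $E^{\hlg}$ through the functor $C^{(-)}$. The key observation that makes everything work is that in all three cases the functor $C^{(-)}\colon\bC^{op}\to\bD$ sends colimits in $\bC$ to limits in $\bD$: in Case~\ref{viowervwervwve1} the mapping space functor $\map(-,C)$ is continuous (contravariantly), in Case~2 the power functor $C^{(-)}$ is defined precisely so that $\map_{\Sp}(A,\map_{\bD}(d,C^{B}))\simeq\map_{\bD}(d,C^{B})\times\dots$ turns colimits of spectra into limits, and in Case~\ref{rgioerp3g3g3} the internal mapping object functor $\underline{\map}(-,C)$ is limit-preserving by assumption. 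In particular $C^{(-)}$ is exact (it sends the zero object to the zero object and fibre sequences to fibre sequences, since these are finite (co)limits). I would state this once as a lemma at the start of the proof and then reuse it four times.

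Granting this, the four axioms fall out mechanically. For coarse invariance: the projection $p\colon\{0,1\}\otimes X\to X$ induces an equivalence $E^{\hlg}(\{0,1\}\otimes X)\xrightarrow{\sim}E^{\hlg}(X)$ because $E^{\hlg}$ is coarsely invariant, hence $E^{\hlg,op}(p)$ is an equivalence in $\bC^{op}$, hence applying $C^{(-)}$ (which preserves equivalences) gives the required equivalence $D_{C}(E^{\hlg})(X)\to D_{C}(E^{\hlg})(\{0,1\}\otimes X)$. For vanishing on flasques: if $X$ is flasque then $E^{\hlg}(X)\simeq 0$, so $E^{\hlg,op}(X)\simeq 0$ in $\bC^{op}$, and $C^{(-)}$ carries the zero object to the zero object of $\bD$. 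For $u$-continuity: $E^{\hlg}$ is $u$-continuous means $E^{\hlg}(X)\simeq\colim_{U\in\cC}E^{\hlg}(X_{U})$ (a colimit, since passing to $\BC^{op}$ turns the limit in the cohomological axiom into a colimit on the homological side; more precisely $E^{\hlg,op}(X)\simeq\lim_{U}E^{\hlg,op}(X_{U})$ in $\bC^{op}$, i.e.\ a colimit in $\bC$), and applying the limit-preserving functor $C^{(-)}\colon\bC^{op}\to\bD$ — note a colimit in $\bC$ is a limit in $\bC^{op}$ — yields $D_{C}(E^{\hlg})(X)\simeq\lim_{U\in\cC}D_{C}(E^{\hlg})(X_{U})$. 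For excision: given a complementary pair $(\cY,Z)$ on $X$, excisiveness of $E^{\hlg}$ gives a pushout square in $\bC$ with corners $E^{\hlg}(Z\cap\cY),E^{\hlg}(Z),E^{\hlg}(\cY),E^{\hlg}(X)$ (using that in a stable $\infty$-category a square is a pushout iff it is a pullback); this is a pullback square in $\bC^{op}$; applying the limit-preserving (in particular pullback-preserving) functor $C^{(-)}$ produces a pullback square in $\bD$ with corners $D_{C}(E^{\hlg})(X),D_{C}(E^{\hlg})(Z),D_{C}(E^{\hlg})(\cY),D_{C}(E^{\hlg})(Z\cap\cY)$, which is exactly the excisiveness square required in Definition~\ref{wefoijoi24r34tt}. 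Here one should check compatibility of $C^{(-)}$ with the formation of $E^{\hlg}(\cY)=\lim_i E^{\hlg}(Y_i)$, which again is just the limit-preservation property.

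Alternatively, and more slickly, one can invoke Corollary~\ref{kor2435tre}: it suffices to show that $D_{C}(E^{\hlg})^{op}\colon\bC\to\bD^{op}$ is a classical coarse homology theory. Now $D_{C}(E^{\hlg})^{op}=\big(C^{(-)}\big)^{op}\circ E^{\hlg}$, and $\big(C^{(-)}\big)^{op}\colon\bC\to\bD^{op}$ is a colimit-preserving exact functor between cocomplete stable $\infty$-categories by the limit-preservation property discussed above. Since a coarse homology theory composed with a colimit-preserving exact functor is again a coarse homology theory — all four axioms of a coarse homology theory are phrased in terms of equivalences, zero objects, pushout squares, and colimits over the entourage poset, all of which are preserved — we conclude that $D_{C}(E^{\hlg})^{op}$ is a coarse homology theory, hence $D_{C}(E^{\hlg})$ is a coarse cohomology theory. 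I would probably present this second route as the main argument and relegate the axiom-by-axiom check to a remark, since it isolates the one nontrivial input.

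The main obstacle is genuinely just the bookkeeping in establishing that $C^{(-)}$ is limit-preserving and exact in each of the three cases, and tracking the op's correctly so that ``limit in $\bC$'' versus ``limit in $\bC^{op}$'' never gets confused — especially in the $u$-continuity axiom, where the cohomological axiom is stated with a $\lim$ over $\cC$ while the homological one is a $\colim$. Case~2 (the power functor) requires a small extra remark: one must know that $\bC$ being complete and powered over $\Sp$ implies $C^{(-)}\colon\Sp^{op}\to\bC$ preserves all limits, which follows from the defining adjunction $\map_{\bC}(d,C^{A})\simeq\map_{\Sp}(A,\map_{\bC}(d,C))$ together with the fact that $\map_{\Sp}(-,\map_{\bC}(d,C))$ turns colimits into limits and limits are detected by the mapping spaces $\map_{\bC}(d,-)$. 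None of this is deep, but it is the only place where the three cases genuinely have to be treated separately rather than uniformly.
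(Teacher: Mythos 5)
Your proposal is correct, and the ``slicker'' route you advocate as the main argument --- observing that $C^{(-),op}:\bC\to\bD^{op}$ is colimit-preserving, so that $D_{C}(E^{\hlg})^{op}=C^{(-),op}\circ E^{\hlg}$ is a classical coarse homology theory, and then invoking Corollary~\ref{kor2435tre} --- is exactly the paper's proof. The axiom-by-axiom verification you include is a correct but redundant elaboration of the same idea.
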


\begin{proof}
The functor $ C^{(-),\op}$ is colimit-preserving. The composition
$$ D_{C}(F)^{\op}=C^{(-),\op}\circ {F} \colon \BC \to \bD^{\op}$$ is therefore a 
  $\bD^{\op}$-valued coarse homology theory. 
Hence
$D_{C}(F)$ is a  $\bD $-valued coarse cohomology theory by Corollary \ref{kor2435tre}. 
\end{proof}


The proof of the following lemma is straightforward:
\begin{lem}\label{lem2435t5rgfder}
If $F$ is strong, then so is $D_{C}(F)$.
\end{lem}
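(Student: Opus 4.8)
The plan is to reduce the claim, exactly as in the proof of Theorem~\ref{wefoewpfewfewfwef}, to the analogous statement on the homology side and then observe that the dualizing functor $C^{(-)}$ kills nothing relevant because it preserves the zero object. Recall that by definition $D_{C}(E^{\hlg})$ is strong if and only if its opposite
$$D_{C}(E^{\hlg})^{op} = C^{(-),op}\circ E^{\hlg}\colon \BC\to \bD^{op}$$
is a strong coarse homology theory, i.e.\ satisfies $D_{C}(E^{\hlg})^{op}(X)\simeq 0$ for every weakly flasque bornological coarse space $X$.

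So let $X$ be weakly flasque. Since $E^{\hlg}$ is strong, $E^{\hlg}(X)\simeq 0$ in $\bC$ (resp.\ in $\Sp$, depending on the case). It then only remains to check that $C^{(-)}$ sends a zero object to the zero object of $\bD$. In each of the three cases of the construction, $C^{(-)}$ is a limit-preserving functor — the mapping space functor $\map(-,C)$, the power functor $A\mapsto C^{A}$, resp.\ the internal mapping object functor $\underline{\map}(-,C)$ — and the zero object of a pointed $\infty$-category is the empty limit (the terminal object); hence $C^{(-)}$ applied to $0$ gives $0$. Therefore $D_{C}(E^{\hlg})(X)\simeq 0$, which proves the lemma.

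There is no genuine obstacle here; the only point worth making explicit is the preservation of the zero object, which is where limit-preservation of $C^{(-)}$ (equivalently, colimit-preservation of $C^{(-),op}$, already invoked in the proof of Theorem~\ref{wefoewpfewfewfwef}) enters. Equivalently, one can phrase the whole argument purely on the homology side: $C^{(-),op}$ is colimit-preserving, hence preserves the zero object, so post-composing the strong coarse homology theory $E^{\hlg}$ with it yields a functor that again vanishes on every weakly flasque space, i.e.\ $D_{C}(E^{\hlg})^{op}$ is strong and thus $D_{C}(E^{\hlg})$ is strong.
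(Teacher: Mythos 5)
Your argument is correct and is precisely the ``straightforward'' proof the paper omits: strongness is a vanishing condition on weakly flasque spaces, and the (co)limit-preserving functor $C^{(-),op}$ preserves the zero object, so post-composing the strong theory $E^{\hlg}$ with it again vanishes on weakly flasque spaces. Nothing further is needed.
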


In general $D_{C}(F)$ is not additive even if $F$ is additive, {as the next example shows.}

\begin{ex}
We consider the additive, $\Ch_{\infty}$-valued coarse ordinary homology theory $H\Q\cX$ with rational coefficients 
 (see \cite[Def.\,6.18]{buen} for the definition and \cite[Cor.\,6.24]{buen} for additivity) and the  object $\Q[0]$ in $\Ch_{\infty}$.
 Then, for every infinite set $I$,
\begin{eqnarray*}
D_{\Q[0]}(H\Q\cX )\big(\bigsqcup_I^\free *\big) &\simeq& \map\big(H\Q\cX^{\op}\big(\bigsqcup_I^\free *\big) ,\Q[0]\big)\\& \simeq &\map\big(\prod_I \Q[0] ,\Q[0]\big)\\&\stackrel{ \not\simeq}{\leftarrow}& \bigoplus_I  \map(\Q[0],\Q[0])\\&\simeq &\bigoplus_{I}D_{\Q[0]}(H\Q\cX)(*)
\end{eqnarray*}
showing that $D_{\Q[0]}(H\Q\cX )$ is not additive. 
Here $\map$ is the internal  mapping  object of $\Ch_{\infty}$.
\end{ex}

 We let $E \colon \BC^{\op}\to \bD$ be a coarse cohomology theory.

\begin{ddd}\label{rgiooerg34t34t}
A pairing between $E$ and $F$ with values in $C$ is a morphism of coarse cohomology theories
$p \colon E\to D_{C}(F)$.
\end{ddd}

In the examples above  
the identity provides a $C$-valued pairing between $D_{C}(F)$ and $F$. 
The examples    $\HAX$   from Theorem   \ref{iergerg} and
$Q_{C}$ from Theorem \ref{regior34t3t34t} further below come with a natural pairing with a corresponding homology theory.

\subsection{Coassembly maps}\label{wgowriepgwegweere}

In this section we translate some of the result from \cite{ass} from coarse homology to coarse cohomology theories.
A coarse homology theory  gives rise to a local homology theory   on the category of uniform bornological coarse spaces 
by pulling back along the cone functor. On the other hand, a local homology theory   can be coarsified to 
a coarse homology theory   by  composing with the Rips complex construction.  Applying these two constructions
in  a row  we get a new coarse homology theory which is connected with the original  one 
by a coarse assembly map. These constructions can be analogously performed with coarse cohomology theories and yield coarse coassembly maps. In the following we describe the details.

In \cite[Sec.\,2]{ass} we introduced the category of uniform bornological coarse spaces $\UBC$. The  notion of a local homology theory  is defined in \cite[Def.\,3.11]{ass}. The universal local homology theory  $$\Yo^{s}\cB \colon \UBC\to \Sp\cB$$  is constructed in  \cite[Sec.\,4]{ass}.
 Furthermore, in \cite[Sec.\,4.4]{equicoarse} we   introduced the universal strong coarse homology theory
$$\Yo^{s}_{\wfl} \colon \BC\to \Sp\cX_{\wfl}\, .$$ 
We let $$\cO^{\infty} \colon \UBC\to \Sp\cX$$ denote the germs-at-$\infty$ of the cone functor (\cite[Sec.~8]{ass} and \cite[Sec.~9.6]{equicoarse}).
By \cite[Lem.~9.5]{ass}   the composition $$\cO^{\infty}_{\wfl}  \colon \UBC\xrightarrow{\cO^{\infty}} \Sp\cX\to \Sp\cX_{\wfl}$$ is a local homology theory and therefore extends essentially uniquely along $\Yo^{s}\cB$ to a colimit-preserving functor (denoted by the same symbol) $$\cO^{\infty}_{\wfl} \colon \Sp\cB\to \Sp\cX_{\wfl}\, .$$

By \cite[Prop.~5.2]{ass} the Rips complex construction yields a   coarse homology theory $$\bP \colon \BC\to  \Sp\cB\, .$$ Therefore the composition
$$ \BC\xrightarrow{\bP}    \Sp\cB\xrightarrow{\cO^{\infty}_{\wfl}}  \Sp\cX_{\wfl}$$ is a  coarse homology theory.
We assume now that $E \colon \BC^{\op}\to \bC$ is a strong coarse cohomology theory.  Then we can interpret $E$ as a limit-preserving functor  defined on $\Sp\cX^{\op}_{\wfl}$. The composition $$E\cO^{\infty}\bP \coloneqq F\circ \cO^{\infty,\op}_{\wfl}\circ \bP^{\op} \colon \BC^{\op}\to \bC$$ is a new  coarse cohomology theory with the same target as $E$.
 It is related with $E$ by the coarse coassembly map, a natural transformation of functors
$$\mu^{E} \colon \Sigma^{-1} E \to E\cO^{\infty}\bP \, .$$
The construction of  the  coarse  coassembly map  is dual to the construction of the coarse  assembly map in the homological case
 \cite[Def.~9.7]{ass}. In detail $\mu^{E}$ is given  by composing $E$ with the transformation
$$\cO^{\infty}_{\wfl}\bP\xrightarrow{ \partial^{\Cone}\circ \bP} \Sigma \bF  \bP \xleftarrow{\simeq}  \Sigma    \, ,$$
of functors    $\Sp\cX \to \Sp\cX_{\wfl}$. Here   $\partial^{\Cone} \colon \cO^{\infty}_{\wfl}\to \Sigma \bF $ is   the  cone boundary 
 (see \cite[Def.~(9.1)]{ass}),     $\bF \colon \Sp\cB\to \Sp \cX_{\wfl}$ (see    \cite[Sec. 6]{ass}) is induced by forgetting the uniform structure, and we used    \cite[Prop.~6.2]{ass} for the second equivalence.  

%
%
%

In \cite[Thm's.\,1.3, 1.4 \& 1.5 ]{ass} we discussed  conditions implying that the coarse assembly map is an equivalence. Using the relation between coarse cohomology theories and coarse homology theories  these results yield conditions on  the bornological coarse space $X$ and   the coarse cohomology theory $E$ which imply that the coarse coassembly map
$$\mu^{E,X} \colon \Sigma^{-1}E(X)\to E\cO^{\infty}\bP(X)$$ is an equivalence.
We will only spell out the cohomological version of  \cite[Thm.\,1.3]{ass} and leave the translation of the other two theorems to the interested reader.

\begin{theorem}
 If $X$ admits a cofinal set of entourages $U$ such that $X_{U}$ has finite asymptotic dimension, then the coarse coassembly map $\mu^{E,X}$ is an equivalence.
\end{theorem}

\section{Coarse ordinary cohomology}
\label{sec243544345}

\subsection{Construction of \texorpdfstring{$\HAX$}{HAX}} 
In this section we construct coarse cohomology   $$\HAX \colon \BC\to \Ch_{\infty} $$ with coefficients in an abelian group $A$.
Its target is the presentable stable $\infty$-category of chain complexes defined as a Dwyer--Kan localization \begin{equation}
\label{eqrerz5645654}
 \iota \colon \Ch\to \Ch_{\infty}
 \end{equation}
 of the 
category $\Ch$ of chain complexes of abelian groups  at 
quasi-isomorphisms. 

To a set $X$ we can {functorially}   associate a simplicial set    $\hat X$,    the \v{C}ech nerve of the projection $X\to *$. For $n$ in $\nat$ its set of $n$-simplices   is  given by  $\hat X[n] \coloneqq X^{\times (n+1)}$.  

  Let $X$ be a bornological coarse space, $U$ be a coarse entourage of $X$, and $B$ be a bounded subset of $X$.
An $n$-simplex   $(x_{0},\dots,x_{n})$ in $\hat X$ is called $U$-controlled if 
 $(x_{i},x_{j})\in U$ for all $i,j$ in $[n]$. We say that this simplex is contained in $B$ if $x_{i}\in B$ for all $i$ in $[n]$.
 
 If {the entourage} $U$ contains the diagonal, then the $U$-{controlled} simplices form a simplicial subset $\hat X_{U}$ of $\hat X$. For simplicity,  we assume that all entourages appearing below contain the diagonal.
   
To any simplicial set $S$ and abelian group $A$ we can functorially associate a chain complex 
$C(S;A)$ in $\Ch$. It is defined as the chain complex associated to the cosimplicial abelian group $A^{S}$.
For $n$ in $\Z$ the group of $n$-chains is given by $C^{n}(S;A) \coloneqq A^{S[n]}$, and the boundary operator
$d \colon C^{n}(S;A)\to C^{n+1}(S;A)$ is given by $\sum_{i=0}^{n+1}(-1)^{i}d_{i}$, where
$d_{i}$ is induced by the $i$th face map $\partial_{i} \colon S[n+1]\to S[n]$. For example,
$\partial_{0}(x_{0},\dots,x_{n+1}) \coloneqq (x_{1},\dots,x_{n+1})$.

We   let $C_{U}(X,B;A)$ be the $\Z$-graded subgroup of $C(\hat X_{U};A)$ of functions which vanish on all simplices which are not contained in $B$. Observe that the $\Z$-graded subgroup $C_{U}(X,B;A)$ is not a subcomplex. Indeed, the differential of $C(\hat X_{U};A)$ restricts to maps
\begin{equation}\label{eqgrz56454643}
d \colon C_{U}(X,B;A)\to C_{U}(X,U[B];A)
\end{equation}
for all $B$ in the bornology $\cB$ of $X$. Hence, if we form the union of these $\Z$-graded subgroups over the bounded subsets $B$  of $X$, then we obtain {the following} subcomplex of $C(\hat X_{U};A)$:
\begin{equation}\label{eqrtrg435reds}
C_{U}(X;A) \coloneqq \colim_{B\in \cB} C_{U}(X,B;A)\, .
\end{equation}
Note that we consider $C_{U}(X;A)$ as an object of $\Ch$.

Let $f \colon X^{\prime}\to X$ be a morphism between bornological coarse spaces. The map $f$ induces a map of simplicial sets $\hat f \colon \hat X^{\prime}\to \hat X$.  Assume that
 $U^{\prime}$ is  an   entourage of $X^\prime$, $U$ is an entourage of $X$, and that $f(U^{\prime})\subseteq U$.
 Then $\hat f$  restricts to a map of simplicial sets
$\hat X^{\prime}_{U^{\prime}}\to \hat X_{U}$. Since $f$ is proper, pull-back along this map induces a morphism of chain complexes
$$f^{*} \colon C_{U}(X;A)\to C_{U^{\prime}}(X^{\prime};A)\,.$$
 
 We now want to perform the derived limit  of $C_{U}(X;A)$ over the entourages $U$ in the coarse structure  $\cC$ of $X$ {(see the Remark \ref{rem_HAX_derived_limit} for the reason why we want the limit to be derived)}.
 In order to produce a functor on the level of $\infty$-categories, technically we will work with right Kan extensions. 
 To this end 
 we consider the following category  $\BC^{\cC}$:
\begin{enumerate}
\item The objects of $\BC^{\cC}$   are pairs $(X,U)$ of a bornological coarse space  $X$ and a coarse entourage $U$ on $X$.
\item The morphisms $(X^{\prime},U^{\prime})\to (X,U)$ in $\BC^{\cC}$ are morphisms of bornological coarse spaces $f \colon X^{\prime}\to X$ such that $f(U^{\prime})\subseteq U$.
\end{enumerate}
We have functors
\begin{equation}\label{kfhiuiueiuwhfiuwefewf}
p \colon \BC^{\cC}\to \BC\,, \quad (X,U)\mapsto X
\end{equation}
and, using    $\iota$ from \eqref{eqrerz5645654},
$$\iota C(A) \colon (\BC^{\cC})^{\op}\to \Ch_{\infty}\, , \quad (X,U)\mapsto \iota C_{U}(X;A)\, .$$

\begin{ddd}\label{ewkfhweiofwefewf}
We define  the functor $\HAX \colon \BC^{\op}\to \Ch_{\infty}$  as the right Kan extension
\[\xymatrix{
(\BC^{\cC})^{\op}\ar[d]_-{{p^{\op}}}\ar[rr]^-{\iota C(A)} & & \Ch_{\infty}\\
\BC^{\op}\ar@{..>}[urr]_-{\ \HAX} & &
}\] of $\iota C(A)$ along $p^{\op}$.
\end{ddd}
  
\begin{rem}
  The point-wise formula for the right Kan extension provides the formula
  \begin{equation}\label{fijewoifjowefwef}
\HAX(X)\simeq \lim_{U\in \cC} \iota  C_{U}(X;A)
  \end{equation}
  for the evaluation of the functor $\HAX$ on the bornological coarse space $X$.   
  It is crucial to apply $\iota $ before taking the limit. This ensures that the limit is derived.
\end{rem}

\begin{theorem}\label{iergerg} The functor 
$\HAX \colon \BC^{\op}\to \Ch_{\infty}$ is a   coarse cohomology theory.
\end{theorem}

\begin{proof}
{The axioms given in Definition~\ref{wefoijoi24r34tt} for a coarse cohomology theory will be verified in the following four Lemmas \ref{lem24356rtewe}, \ref{oirejgoigergergerg}, \ref{efiwejfowfjewfwf} and \ref{rgioug0oergergeg}.}
\end{proof}

\begin{lem}\label{lem24356rtewe}
$\HAX$ is $u$-continuous
\end{lem}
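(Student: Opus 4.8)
The plan is to prove $u$-continuity directly from the point-wise formula \eqref{fijewoifjowefwef}. Fix a bornological coarse space $X$ with coarse structure $\cC$. For each entourage $U\in\cC$ containing the diagonal we have the bornological coarse space $X_U$, whose coarse structure is $\cC\langle U\rangle$. Since every entourage of $X_U$ is contained in $U^{\circ n}$ for some $n$, cofinality gives
\[
\HAX(X_U)\simeq \lim_{V\in\cC\langle U\rangle}\iota C_V(X;A)\simeq \lim_{n\in\nat}\iota C_{U^{\circ n}}(X;A)\ .
\]
Hence I would compute $\lim_{U\in\cC}\HAX(X_U)\simeq \lim_{U\in\cC}\lim_{n\in\nat}\iota C_{U^{\circ n}}(X;A)$, and the claim becomes that this iterated limit agrees with $\lim_{U\in\cC}\iota C_U(X;A)\simeq\HAX(X)$, the comparison map being the obvious one. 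First I would observe that the map $\HAX(X)\to\lim_{U\in\cC}\HAX(X_U)$ induced by the morphisms $X_U\to X$ is the natural one, so only the equivalence needs checking.

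The key step is a cofinality argument. The set of pairs $(U,n)$ with $U\in\cC$, $n\in\nat$, ordered so that $U^{\circ n}$ is the associated entourage, maps to $\cC$ by $(U,n)\mapsto U^{\circ n}$, and I claim this map is cofinal: given any entourage $W\in\cC$, the pair $(W,1)$ satisfies $W\subseteq W^{\circ 1}=W$ (after enlarging $W$ to contain the diagonal, which is harmless since such $W$ are cofinal in $\cC$), and conversely every $U^{\circ n}$ lies in $\cC$. More precisely, I would argue that the two cofiltered diagrams $U\mapsto \iota C_U(X;A)$ (indexed by $\cC$) and $(U,n)\mapsto\iota C_{U^{\circ n}}(X;A)$ (indexed by $\cC\times\nat$, or rather the relevant poset) have the same limit because the index functor is cofinal and the value only depends on the composite entourage. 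Then Fubini for limits rearranges $\lim_{U}\lim_{n}$ into $\lim_{(U,n)}$, and cofinality identifies this with $\lim_{W\in\cC}$.

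I expect the main obstacle to be bookkeeping the indexing categories correctly, in particular verifying that the relevant functor between (co)filtered posets is cofinal in the $\infty$-categorical sense (Lurie's Theorem~4.1.3.1, or simply that the comma categories are weakly contractible, which here reduces to a directedness statement about entourages). A secondary point to be careful about is that one must take $\iota$ \emph{before} the limit so that all limits in sight are homotopy limits in $\Ch_\infty$; since $\Ch_\infty$ is complete this is fine, but it means the Fubini rearrangement must be performed in $\Ch_\infty$ rather than naively on the level of chain complexes. Once the cofinality is in place, the equivalence $\HAX(X)\xrightarrow{\ \sim\ }\lim_{U\in\cC}\HAX(X_U)$ is immediate, which is exactly $u$-continuity.
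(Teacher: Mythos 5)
Your proposal is correct and follows essentially the same route as the paper: apply the point-wise formula \eqref{fijewoifjowefwef} to both $X$ and $X_U$, and then identify the iterated limit $\lim_{U\in\cC}\lim_{V\in\cC\langle U\rangle}\iota C_V(X;A)$ with $\lim_{U\in\cC}\iota C_U(X;A)$ by a cofinality argument. The paper leaves the cofinality consideration implicit, whereas you spell it out (including the Fubini rearrangement in $\Ch_\infty$ and the need to take $\iota$ before the limit), which is a faithful elaboration rather than a different proof.
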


\begin{proof}
Let $X$ be a bornological coarse space with coarse structure $\cC$. Using \eqref{fijewoifjowefwef} and a cofinality consideration we get the chain of canonical equivalences
\[\HAX(X) \simeq \lim_{U\in \cC} \iota C_{U}(X;A)\simeq \lim_{U\in \cC} \lim_{V\in \cC\langle U\rangle} \iota  C_{V}(X;A)\simeq \lim_{U\in \cC}   \HAX(X_{U})\,.\qedhere\]
\end{proof}

\begin{lem}\label{oirejgoigergergerg}
$\HAX$ is coarsely invariant.
\end{lem}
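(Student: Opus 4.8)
The plan is to verify coarse invariance directly at the level of the \v Cech complexes computing $\HAX$, via the point-wise formula \eqref{fijewoifjowefwef}, by exhibiting mutually inverse chain homotopy equivalences that are natural in the entourage. Concretely, I must show that the projection $p\colon\{0,1\}\otimes X\to X$ induces an equivalence $p^{*}\colon\HAX(X)\to\HAX(\{0,1\}\otimes X)$.

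First I would unwind $\{0,1\}\otimes X$. Since $\{0,1\}$ carries the maximal coarse and bornological structures, the entourages of the form $\{0,1\}^{2}\times U$ with $U\in\cC$ are cofinal in the coarse structure of $\{0,1\}\otimes X$, and the bounded sets of the form $\{0,1\}\times B$ with $B$ bounded in $X$ are cofinal in its bornology. As the \v Cech nerve only depends on the underlying set, $\widehat{\{0,1\}\otimes X}\cong\widehat{\{0,1\}}\times\hat X$, and an $n$-simplex $((e_{0},x_{0}),\dots,(e_{n},x_{n}))$ is $\{0,1\}^{2}\times U$-controlled precisely when $(x_{i},x_{j})\in U$ for all $i,j$; hence the $\{0,1\}^{2}\times U$-controlled simplicial subset of $\widehat{\{0,1\}\otimes X}$ equals $\widehat{\{0,1\}}\times\hat X_{U}$ (every simplex of the \v Cech nerve of $\{0,1\}$ is controlled). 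Together with the description of bounded sets this identifies $C_{\{0,1\}^{2}\times U}(\{0,1\}\otimes X;A)$ with the subcomplex of $C(\widehat{\{0,1\}}\times\hat X_{U};A)$ of those cochains whose support, in the $X$-coordinate, is bounded. A cofinality consideration as in the proof of Lemma~\ref{lem24356rtewe} then gives
\[\HAX(\{0,1\}\otimes X)\simeq\lim_{U\in\cC}\iota C_{\{0,1\}^{2}\times U}(\{0,1\}\otimes X;A)\ .\]

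Next I would produce the comparison maps, naturally in $U$. The projection $p$ induces $p^{*}\colon C_{U}(X;A)\to C_{\{0,1\}^{2}\times U}(\{0,1\}\otimes X;A)$, which under the above identification is pullback along the projection $\widehat{\{0,1\}}\times\hat X_{U}\to\hat X_{U}$; and the morphism $i_{0}\colon X\to\{0,1\}\otimes X$, $x\mapsto(0,x)$, induces $i_{0}^{*}$ in the other direction, with $i_{0}^{*}\circ p^{*}=\mathrm{id}$ since $p\circ i_{0}=\mathrm{id}_{X}$. To see that $p^{*}\circ i_{0}^{*}$ is homotopic to the identity I would write down the simplicial homotopy
\[h\colon\Delta^{1}\times\widehat{\{0,1\}}\to\widehat{\{0,1\}}\ ,\qquad(\alpha\colon[n]\to[1],(e_{0},\dots,e_{n}))\longmapsto(d_{0},\dots,d_{n})\ ,\]
where $d_{i}:=e_{i}$ if $\alpha(i)=0$ and $d_{i}:=0$ otherwise; one checks it is a well-defined simplicial map and a homotopy from $\mathrm{id}_{\widehat{\{0,1\}}}$ to the composite $\widehat{\{0,1\}}\to *\to\widehat{\{0,1\}}$. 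Crossing with $\hat X_{U}$, the map $h\times\mathrm{id}$ is a simplicial homotopy from the identity of $\widehat{\{0,1\}}\times\hat X_{U}$ to the self-map induced by $i_{0}\circ p$, hence yields a chain homotopy between $p^{*}\circ i_{0}^{*}$ and the identity on cochains; since $h\times\mathrm{id}$ leaves the $\hat X_{U}$-coordinate untouched, and faces and degeneracies of a simplex contained in a box $\{0,1\}\times B$ are again contained in it, this chain homotopy preserves the bounded-support condition and therefore restricts to $C_{\{0,1\}^{2}\times U}(\{0,1\}\otimes X;A)$.

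Consequently $p^{*}$ and $i_{0}^{*}$ are mutually inverse chain homotopy equivalences, in particular quasi-isomorphisms, natural in $U$; applying $\iota$ gives an equivalence of $\cC$-indexed diagrams in $\Ch_{\infty}$, and passing to limits over $U$ (using \eqref{fijewoifjowefwef} and the displayed formula above) shows $p^{*}\colon\HAX(X)\to\HAX(\{0,1\}\otimes X)$ is an equivalence, which is the asserted coarse invariance. There is no conceptually deep point here: the homotopy $h$ is completely explicit, and the real work is the bookkeeping, i.e.\ the cofinality reductions to product entourages and to bounded sets of the form $\{0,1\}\times B$, and the verification that $h\times\mathrm{id}$ is compatible with the asymmetric support condition so that everything survives the colimit over bounded sets — this compatibility is the step I expect to require the most care.
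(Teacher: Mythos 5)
Your proof is correct and uses essentially the same mechanism as the paper: an explicit prism chain homotopy between $(i\circ p)^{*}$ and the identity on the controlled cochain complexes (the paper writes the prism operator $h_{i}$ directly for an arbitrary pair of close maps and specializes to $f=\id$, $g=i\circ p$, whereas you derive the same operator from a simplicial homotopy on the $\widehat{\{0,1\}}$-factor of the product decomposition), together with the check that it respects the support condition, and then passage to the limit over entourages.
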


\begin{proof}
Let $f,g \colon X\to X^{\prime}$ be two maps of bornological coarse spaces which are close to each other. If $U$ is an entourage of $X$, 
then we choose an entourage $U^{\prime}$ of $X^{\prime}$  so large that $f(U)\subseteq U^{\prime}$, $g(U)\subseteq U^{\prime}$ and $(f,g)(\diag(X))\subseteq U^{\prime}$. 
Then for    all $n$ in $\nat$ and $i$ in $[n]$  we  have    the maps
$h_{i} \colon \hat X_{U}[n]\to \hat X_{U^{\prime}}^{\prime}[n+1]$ given by
$$(x_{0},\dots,x_{n})\mapsto (f(x_{0}),\dots,f(x_{i}),g(x_{i}),\dots, g(x_{n}))\, .$$
Pull-back along $h_{i}$ induces 
 a map
$$h_{i}^{n,*} \colon C_{U^{\prime}}^{n+1}(X^{\prime};A)\to C_{U}^{n}(X;A)\, . $$
We form $h^{n} \coloneqq \sum_{i=0}^{n} (-1)^{i}h_{i}^{n,*}$ and the  map
$h \coloneqq \bigoplus_{n\in \Z} h^{n}$ of degree $-1$.
Then one checks directly that
$$d\circ h+h\circ d=g^{*}-f^{*} \colon C_{U^{\prime}}(X^{\prime} ;A)\to C_{U }(X ;A)\, .$$

Let $X$ be a bornological coarse space. We consider the maps
$$p \colon \{0,1\}\otimes X\to X\, , \quad i \colon X\to \{0,1\}\otimes X$$
given by the projection and the inclusion of the point $0$, respectively. Then $p\circ i=\id_{X}$ and $i\circ p$ is close to the identity of $\{0,1\}\otimes X$.
For an entourage $U$ of $X$ let $\tilde U \coloneqq \{0,1\}^{2}\times U$ be the corresponding entourage of $\{0,1\}\otimes X$.
The above construction then shows that $(i\circ p)^{*}$ is chain homotopic to the identity on $C(\{0,1\}\otimes X;A)$.
This implies that
$$p^{*} \colon C_{U}(X;A)\to C_{\tilde U}(\{0,1\}\otimes X;A)$$
is an equivalence for every entourage $U$ of $X$. We conclude that
$$\HAX(p) \colon \HAX(X)\to \HAX(\{0,1\}\otimes X)$$
is an equivalence.
\end{proof}

\begin{lem}\label{efiwejfowfjewfwf}
$\HAX$ is excisive.
\end{lem}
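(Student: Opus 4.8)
I want to show that $\HAX$ sends complementary pairs to cartesian squares. Let $(\cY,Z)$ be a complementary pair on a bornological coarse space $X$, where $\cY=(Y_i)_{i\in I}$. Using the $u$-continuity already established in Lemma~\ref{lem24356rtewe} (and a cofinality argument to commute the various limits over the coarse structure), it suffices to produce, for each entourage $U$ containing the diagonal, a suitable square at the level of the chain complexes $C_U(-;A)$ and to check it is cartesian after applying $\iota$; passing to the limit over $U\in\cC$ then yields the claim. So the first step is to reduce the statement to a single entourage $U$.

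\textbf{The chain-level square.} For a fixed $U$, the $U$-controlled \v{C}ech nerve $\hat X_U$ is built from the controlled simplices of $X$, and for $Y\subseteq X$ one has $\hat Y_U = \hat X_U\cap \hat Y$ (simplices all of whose vertices lie in $Y$). Since $\HAX$ is \emph{contravariant}, excision for it is the \emph{dual} of the corresponding statement for the homology-type theory $C_U(-;A)$-dual; concretely, I expect to exhibit a short exact sequence of chain complexes
\[
0\to C_U(X;A)\to C_U(Z;A)\oplus C_U(\cY;A)\to C_U(Z\cap\cY;A)\to 0
\]
(where $C_U(\cY;A):=\colim_{i} C_U(Y_i;A)$ — note the colimit, because these are subgroups of functions with bounded support, pulled back covariantly), whose exactness is a statement about functions on the set of $U$-controlled simplices: a $U$-controlled simplex with vertices in $X$ lies in $Z$ or, by the definition of a complementary pair, lies in some $Y_i$ once $U$ is absorbed into the big family (this is exactly the point where $(\cY,Z)$ being complementary is used — every $U$-thickening of $X\setminus Z$ is eventually swallowed by the $Y_i$). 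The Mayer–Vietoris / cartesian square for $\iota C_U(-;A)$ follows from this short exact sequence in the stable $\infty$-category $\Ch_\infty$, since short exact sequences of chain complexes become fiber sequences, hence (bi)cartesian squares, after localization.

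\textbf{Assembling.} Having the cartesian square
\[
\xymatrix{\iota C_U(X;A)\ar[r]\ar[d] & \iota C_U(Z;A)\ar[d]\\ \iota C_U(\cY;A)\ar[r] & \iota C_U(Z\cap\cY;A)}
\]
for every $U\in\cC$, I take the limit over $U\in\cC$. Limits of cartesian squares are cartesian, and by $u$-continuity the limit of the upper-left corner is $\HAX(X)$, of the upper-right is $\HAX(Z)$; for the lower corners I need $\lim_U \iota C_U(\cY;A)\simeq \HAX(\cY)=\lim_{i}\HAX(Y_i)$ and similarly for $Z\cap\cY$, which is again a cofinality/Fubini interchange of the limit over $U$ with the limit over $i\in I$ (using that $\iota$ and $\lim$ commute appropriately, and that the coarse structures of the $Y_i$ are induced from $X$). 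This gives the desired cartesian square for $\HAX$.

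\textbf{Main obstacle.} The routine-looking but genuinely delicate point is the exactness of the chain-level sequence, specifically surjectivity onto $C_U(Z\cap\cY;A)$ and the matching of $\cY$ with a colimit: one must correctly track the \emph{support} conditions (functions vanishing off bounded sets, with the differential only preserving this up to $U$-thickening, cf.~\eqref{eqgrz56454643}) and verify that the complementary-pair hypothesis — $Z\cup Y_i = X$ for $U$-thickenings absorbed into the big family, together with coarse excision $X = \{U[X\setminus Z]\}\cup Z$ — is exactly what makes every $U$-controlled simplex of $X$ decompose. A second, more bookkeeping-heavy obstacle is the interchange of the $U$-limit with the $I$-limit on the bottom row; this needs the big family to be filtered and a cofinality argument, but presents no conceptual difficulty.
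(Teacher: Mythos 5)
Your overall strategy (fix an entourage $U$, produce a chain-level exact sequence, apply $\iota$, then pass to the limits over $U$ and $I$) is in the right spirit, but your treatment of the big family $\cY$ contains a genuine gap, and it sits exactly where you flag the ``main obstacle''. First, the excision axiom requires $E(\cY)=\lim_{i\in I}E(Y_i)$, a \emph{limit} along the restriction maps; your term $C_U(\cY;A):=\colim_i C_U(Y_i;A)$ does not compute this, and the covariant transition maps you would need (extension by zero $C_U(Y_i;A)\to C_U(Y_j;A)$) are not chain maps on the absolute complexes: if a simplex of $\hat X_U$ has exactly one vertex outside $Y_i$, all but one of its faces leave $Y_i$, so $d(\mathrm{ext}(\phi))\neq \mathrm{ext}(d\phi)$ in general. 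If instead you take the limit over restrictions, your short exact sequence would have to be formed \emph{after} the limit over $i$, and limits in $\Ch$ do not preserve short exact sequences (the paper makes exactly this point in the remark right after the proof); moreover the $U$-covering condition $Z\cup Y_i=X$ holds only for cofinally many $i$, so the $i$-dependence cannot be collapsed into a single sequence.

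The paper's proof sidesteps both problems by comparing \emph{relative} complexes rather than writing a Mayer--Vietoris sequence: for each $i$ one has the degreewise-surjective restriction with kernel $C_U(X,Y_i;A)$, giving $0\to C_U(X,Y_i;A)\to C_U(X;A)\to C_{U_{Y_i}}(Y_i;A)\to 0$ and its analogue over $Z$, and one shows that the restrictions $r_i:C_U(X,Y_i;A)\to C_{U_Z}(Z,Z\cap Y_i;A)$ form an isomorphism of pro-systems over $I$. The inverse is extension by zero $s_i:C_{U_Z}(Z,Z\cap Y_j;A)\to C_U(X,Y_i;A)$ whenever $U[Y_i]\subseteq Y_j$ --- and here extension by zero \emph{is} a chain map, precisely because relative cochains vanish on a $U$-thickening of $Y_i$, cf.~\eqref{eqgrz56454643}; this index shift is exactly where the big-family hypothesis enters. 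Applying $\iota$ (turning the exact sequences into fibre sequences) and then $\lim_{i}$ and $\lim_U$, which preserve fibre sequences in $\Ch_{\infty}$, the pro-isomorphism makes the outer vertical maps equivalences, hence the middle square cartesian. To repair your argument you would need to replace your single Mayer--Vietoris sequence by this relative comparison, or at least formulate your sequence as a pro-system over $i$ and prove the corresponding pro-level exactness.
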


\begin{proof}
Let $X$ be a bornological coarse space $X$. For an entourage $U$ and a subset $Y$ of $X$ we write
$U_{Y} \coloneqq U\cap (Y\times Y)$. We have a surjective restriction
$$C_{U}(X;A)\to C_{U_{Y}}(Y;A)\, .$$
We denote its kernel by
$C_{U}(X,Y;A)$.

Let $(Z,\cY)$ be a complementary pair on $X$ with $\cY=(Y_{i})_{i\in I}$. For every
$i$ in $I$  we consider the map of exact sequences $$\xymatrix{
0\ar[r]&C_{U}(X,Y_{i};A)\ar[r]\ar[d]^{r_{i}}&C_{U}(X;A)\ar[d]\ar[r]&C_{U_{Y_{i}}}(Y_{i};A)\ar[r]\ar[d]&0\\
0\ar[r]&C_{U_{Z}}(Z,Z\cap Y_{i};A)\ar[r]&C_{U_{Z}}(Z;A)\ar[r]&C_{U_{Z\cap Y_{i}}}(Z\cap Y_{i};A)\ar[r]&0
}\ ,$$
 where the vertical morphisms are the corresponding restriction maps.

We claim that the morphism $(r_{i})_{i}$ is an isomorphism of pro-systems indexed by $I$.
Indeed, we can define a map of complexes 
$s_{i} \colon C_{U_{Z}}(Z,Z\cap Y_{j};A)\to C_{U}(X,Y_{i};A)$ by extension by zero as long as $j$ in $I$ satisfies
$U[Y_{i}]\subseteq Y_{j}$.  Since $\cY$ is a big family, for every $i$ in $I$ we can choose such an index $j$ in $I$. Then the resulting family $(s_{i})_{i\in I}$ is an inverse of $(r_{i})_{i\in I}$.

The localization $\iota$ sends short exact sequences of chain complexes to fibre sequences. We apply $\iota$ and $\lim_{i\in I}$ and $\lim_{U\in \cC}$ in order to get the morphism between fibre sequences
\[\mathclap{\xymatrix{
\lim_{U\in \cC}\lim_{i\in I}\iota C_{U}(X,Y_{i};A)\ar[r]\ar[d]^{\simeq } & \HAX(X)\ar[d]\ar[r] & \HAX(\cY) \ar[d] \\
\lim_{U\in \cC}\lim_{i}\iota C_{U_{Z}}(Z,Z\cap Y_{i};A)\ar[r] & \HAX(Z)\ar[r] & \HAX(Z\cap \cY)  }}\ .\]
In view of the  left vertical equivalence  the right  square is cartesian.
\end{proof}
 
\begin{rem}\label{rem_HAX_derived_limit}
It is important for the proof of Lemma \ref{efiwejfowfjewfwf} that we consider the limits after applying $\iota$. Limits in $\Ch_{\infty}$ preserve fibre sequences. In contrast, limits in $\Ch$ in general do not preserve short exact sequences.
\end{rem}

\begin{lem}\label{rgioug0oergergeg}
$\HAX$ vanishes on flasques.
\end{lem}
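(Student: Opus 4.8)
The plan is to exploit the defining property of a flasque bornological coarse space $X$: there is a morphism $f\colon X\to X$ that (i) is close to $\id_X$, (ii) is "eventually outside every bounded set" in the sense that for every bounded $B$ there is $k$ with $f^k(X)\cap B=\emptyset$, and (iii) satisfies $\bigcup_{k\ge 0}(f^k\times f^k)(U)\in\cC$ for every entourage $U$. By $u$-continuity (Lemma~\ref{lem24356rtewe}) and the formula \eqref{fijewoifjowefwef} it suffices, after fixing an entourage $U$, to work on the level of the chain complexes $C_V(X;A)$ for $V$ ranging over $\cC\langle U\rangle$, and in fact it is cleanest to pass to the entourage $U_\infty:=\bigcup_k(f^k\times f^k)(U)$, which by (iii) lies in $\cC$ and is $f$-invariant, so that $f$ induces a chain self-map $f^*\colon C_{U_\infty}(X;A)\to C_{U_\infty}(X;A)$.

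The key computation is to produce a chain homotopy realizing $\sum_{k\ge 0}(f^k)^*$ as a "Eilenberg swindle" on $C_{U_\infty}(X;A)$. Concretely, one checks that the infinite sum $\Phi:=\sum_{k\ge 0}(f^k)^*$ is a well-defined endomorphism of $C_{U_\infty}(X;A)$: by property~(ii), on the subgroup $C_{U_\infty}(X,B;A)$ of cochains supported on a bounded set $B$ only finitely many terms $(f^k)^*$ are nonzero (since $f^k(X)\cap B=\emptyset$ for $k$ large forces $(f^k)^*$ to vanish on cochains supported on $B$), and since $C_{U_\infty}(X;A)=\colim_{B}C_{U_\infty}(X,B;A)$ the sum is locally finite, hence defines a chain map. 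It satisfies the swindle identity $\Phi=\id+f^*\circ\Phi$ (equivalently $\id=\Phi-\Phi\circ f^*$ after reindexing), so $\id$ factors through a map that is, by the same argument as in Lemma~\ref{oirejgoigergergerg}, chain homotopic to zero — because $f$ is close to $\id_X$, the standard prism homotopy there shows $f^*\simeq\id$, and more is true: one gets $\id\simeq 0$ on $C_{U_\infty}(X;A)$ by the swindle. Hence $\iota C_{U_\infty}(X;A)\simeq 0$ in $\Ch_\infty$.

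Finally I would assemble this: since the $U_\infty$ (as $U$ runs over $\cC$) are cofinal in $\cC$ — indeed $U\subseteq U_\infty$ for each $U$, so they form a cofinal subfamily — the limit defining $\HAX(X)$ in \eqref{fijewoifjowefwef} may be computed over just these entourages, and each term is zero, so $\HAX(X)\simeq\lim_{U\in\cC}\iota C_{U_\infty}(X;A)\simeq 0$.

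I expect the main obstacle to be making the Eilenberg swindle rigorous at the cochain level: one must verify carefully that $\Phi=\sum_k(f^k)^*$ is well defined (the local-finiteness argument using the colimit over bounded sets and property~(ii) of flasqueness) and that it is a genuine chain map commuting with the differentials \eqref{eqgrz56454643}, which requires checking that the bookkeeping of bounded supports (the $B\mapsto U_\infty[B]$ shift under $d$) is compatible with the infinite sum. Everything else — the homotopy $f^*\simeq\id$, the cofinality of the $U_\infty$, and the passage through $\iota$ so that the limit is derived — is either routine or already contained in the proofs of Lemmas~\ref{lem24356rtewe} and~\ref{oirejgoigergergerg}.
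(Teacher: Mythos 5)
Your overall strategy is the paper's: the swindle operator $\sum_{k\ge 0}(f^k)^{*}$, its well-definedness via local finiteness of supports, and the identity $\Phi=\id+f^{*}\circ\Phi$ all appear (essentially verbatim) in the paper's proof. But there is a genuine gap in where you run the swindle. You try to conclude $\iota C_{U_\infty}(X;A)\simeq 0$ for each \emph{fixed} entourage, and for that you need $f^{*}\simeq\id$ as endomorphisms of the single complex $C_{U_\infty}(X;A)$. The prism homotopy of Lemma~\ref{oirejgoigergergerg} does not give this: it produces a homotopy only as a map $C_{U'}(X;A)\to C_{U_\infty}(X;A)$ where $U'$ must additionally contain $(f,\id)(\diag(X))$, i.e.\ one must enlarge the entourage, and there is in general no single $f$-invariant entourage absorbing all the iterated enlargements. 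The intermediate claim is in fact false: take $X=\nat$ with its metric structures, $f(n)=n+1$, and $U=\diag(X)$. Then $U_\infty=\diag(X)$, the complex $C_{\diag}(\nat;A)$ has $H^{0}\cong A^{(\nat)}\neq 0$, and $f^{*}$ acts on $H^{0}$ as the (non-identity) shift — consistent with $\Phi=\id+f^{*}\Phi$ but not with $\id\simeq 0$.

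The correct way out, which is what the paper does, is to keep track of the change of entourage throughout: the swindle is a map $S\colon C_{V}(X;A)\to C_{U}(X;A)$ with $V=\bigcup_{n}f^{n}(U)$, satisfying $f^{*}\circ S+r=S$ where $r$ is the \emph{restriction} map (not the identity). Only after applying $\iota$ and taking $\lim_{U\in\cC}$ do these assemble into an endomorphism $\tilde S$ of $\HAX(X)$ with $\HAX(f)\circ\tilde S+\id\simeq\tilde S$; at that point coarse invariance of the functor $\HAX$ (Lemma~\ref{oirejgoigergergerg}, which is a statement about the limit, not about individual $C_{U}$'s) gives $\HAX(f)\simeq\id$ and the swindle forces $\HAX(X)\simeq 0$. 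So the vanishing genuinely happens only after the limit over $\cC$, and your proof needs to be restructured accordingly.
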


\begin{proof}
Let $X$ be a flasque bornological coarse space with flasqueness implemented by the morphism $f \colon X\to X$. We define a map of chain complexes $$S \colon C_{V}(X;A)\to C_{U}(X;A)$$ by
$$S(\phi) \coloneqq \sum_{n=0}^{\infty} f^{n,*}\phi\, ,$$
where $U$ is an entourage of $X$ and  $V \coloneqq \bigcup_{n\in \nat} f^{n}(U)$. 
Since $\phi$ is supported on some bounded subset of $X$ almost all summands vanish and the sum has a well-defined interpretation. One furthermore checks that
$$f^{*}\circ S +r=S\, ,$$
where $r \colon C_{V}(X;A)\to  C_{U}(X;A)$ is the restriction. Applying $\iota$ and $\lim_{U\in \cC}$, the morphisms $S$ for various $U$   induce a morphism of chain complexes
$$\tilde S \colon \HAX(X)\to \HAX(X)$$
satisfying
\begin{equation}\label{fwefklj234or23r23r}
\HAX(f)\circ \tilde S+\id_{\HAX(X)}\simeq \tilde S\, .
\end{equation} 
Since $\HAX$ is coarsely invariant by Lemma  \ref{oirejgoigergergerg} we {get} the equivalence
\begin{equation}\label{fwefklj234or23r23r1}
\tilde S+\id_{\HAX(X)}\simeq \tilde S\, .
\end{equation}
It implies $\HAX(X)\simeq 0$.
\end{proof}

We have thus verified all four axioms for coarse cohomology theories and hence finished the proof of Theorem \ref{iergerg}.

In the remaining part of this section we will verify that $\HAX$ is a strong coarse cohomology theory that satisfies additivity, {and in the following section} we compare our definition to the original one of Roe \cite{roe_coarse_cohomology} and describe a natural pairing with coarse homology.

\begin{lem}
$\HAX$ is strong.
\end{lem}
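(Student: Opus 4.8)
The plan is to show that $\HAX$ vanishes on every weakly flasque bornological coarse space, which by the discussion following Definition~\ref{wefoijoi24r34tt} (``a coarse cohomology theory $E$ is strong if and only if $E^{op}$ is a strong coarse homology theory'') is exactly the assertion. Recall from \cite[Def.~4.17]{equicoarse} that weak flasqueness of $X$ is witnessed by a morphism $f\colon X\to X$ together with an entourage $W$ such that $f$ is close to $\id_{X}$ via $W$, the iterates $f^{n}$ are uniformly controlled, and $f$ ``pushes bounded sets off to infinity'' in the appropriate sense. The strategy is to reuse the contracting homotopy argument of Lemma~\ref{rgioug0oergergeg} essentially verbatim, together with the coarse invariance of $\HAX$ (Lemma~\ref{oirejgoigergergerg}) to absorb the fact that $f$ is no longer the identity but only close to it.

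Concretely, I would fix an entourage $U$ of $X$, enlarge it if necessary so that $U\supseteq W$, and set $V:=\bigcup_{n\in\nat} f^{n}(U)$; the uniform control of the iterates $f^{n}$ guarantees that $V$ is again an entourage of $X$. For a cochain $\phi\in C_{V}(X;A)$ supported on a bounded set $B$, the condition that $f$ moves bounded sets off to infinity (more precisely, that for each bounded $B$ only finitely many $f^{n}(B)$ meet $B$, or the corresponding condition used in \cite{equicoarse}) ensures that the sum $S(\phi):=\sum_{n=0}^{\infty} f^{n,*}\phi$ has only finitely many nonzero summands on each simplex contained in any given bounded set, so $S\colon C_{V}(X;A)\to C_{U}(X;A)$ is well defined, and one checks directly that $f^{*}\circ S + r = S$ where $r$ is the restriction $C_{V}(X;A)\to C_{U}(X;A)$. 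Applying $\iota$ and passing to the limit over $U\in\cC$ assembles these into a morphism $\tilde S\colon\HAX(X)\to\HAX(X)$ satisfying $\HAX(f)\circ\tilde S + \id_{\HAX(X)}\simeq\tilde S$.

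It then remains to identify $\HAX(f)$ with the identity. Since $f$ is close to $\id_{X}$, Lemma~\ref{oirejgoigergergerg} (or rather the chain-homotopy construction in its proof, applied to the pair of close maps $f,\id_{X}\colon X\to X$) shows that $f^{*}$ is chain homotopic to $r$ on the relevant cochain complexes compatibly in $U$, whence $\HAX(f)\simeq\id_{\HAX(X)}$. Substituting this into the relation above gives $\tilde S + \id_{\HAX(X)}\simeq\tilde S$ in the stable $\infty$-category $\Ch_{\infty}$, and cancelling $\tilde S$ forces $\HAX(X)\simeq 0$, as desired.

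The main obstacle I anticipate is bookkeeping with entourages and bounded sets rather than anything conceptual: one must check that the specific finiteness condition in the definition of weak flasqueness in \cite{equicoarse} is precisely what is needed to make the infinite sum $S(\phi)$ locally finite at the cochain level, and that the homotopy $S$ is compatible with the structure maps $C_{U}(X;A)\to C_{U}(X,U[B];A)$ so that it descends after the colimit over $B$ and the limit over $U$. Once the same ``telescoping'' identity $f^{*}\circ S + r = S$ is in hand, the argument is formally identical to Lemma~\ref{rgioug0oergergeg}, the only new input being the replacement of $\HAX(\id)$ by $\HAX(f)\simeq\HAX(\id)$ via coarse invariance.
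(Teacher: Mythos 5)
There is a genuine gap, and it lies in your recollection of the definition of weak flasqueness. You assume that the implementing morphism $f\colon X\to X$ is \emph{close to} $\id_{X}$ (``via $W$''). That is the hypothesis of ordinary flasqueness; under it, $X$ would simply be flasque and the claim would already be covered by Lemma~\ref{rgioug0oergergeg}. In \cite[Def.~4.17]{equicoarse} the closeness condition is precisely the one that gets weakened: one only requires that $f$ becomes equivalent to the identity after applying the universal coarse homology theory, i.e.\ $\Yo^{s}(f)\simeq \id_{\Yo^{s}(X)}$ (the other two conditions — uniform control of the iterates $f^{n}$ and pushing bounded sets to infinity — are retained). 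Consequently your final step, where you invoke the chain homotopy from the proof of Lemma~\ref{oirejgoigergergerg} applied to the pair of close maps $f$ and $\id_{X}$ to conclude $\HAX(f)\simeq\id_{\HAX(X)}$, is not available: $f$ and $\id_{X}$ need not be close, so there is no such chain homotopy at the cochain level.

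The construction of $S$ and the telescoping identity $f^{*}\circ S+r=S$ go through exactly as you describe (this part uses only the uniform control of the iterates and the escape-to-infinity condition, not closeness), and this is indeed how the paper proceeds. The missing idea is how to deduce $\HAX(f)\simeq\id_{\HAX(X)}$ from the weaker hypothesis: since $\HAX$ has already been shown to be a coarse cohomology theory, it factors through $\Yo^{s}$ as a limit-preserving functor on $\Sp\cX^{op}$ (Corollary~\ref{kor2435zrterwe}), so $\Yo^{s}(f)\simeq\id_{\Yo^{s}(X)}$ directly forces $\HAX(f)\simeq\id_{\HAX(X)}$. Substituting this into the telescoping relation then yields $\tilde S+\id\simeq\tilde S$ and hence $\HAX(X)\simeq 0$, as in your last step. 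So the repair is short, but it is the one conceptual point that distinguishes ``strong'' from ``vanishes on flasques,'' and your proposal as written does not address it.
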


\begin{proof}
We assume that $X$ is weakly flasque {and we} repeat the argument for Lemma~\ref{rgioug0oergergeg}. Because we already know that $\HAX$ is a coarse cohomology theory, {in order} to see that~\eqref{fwefklj234or23r23r} implies \eqref{fwefklj234or23r23r1} it suffices to {have} $\Yo^{s}(f)\simeq \id_{\Yo^{s}(X)}$.
\end{proof}



Recall Definition \ref{defn_additivity} of (strong) additivity and  the definition     of a free union (see \cite[Def.\,2.27]{buen}). 
The following result 
implies additivity of $\HAX$, but because of the additional assumption it is weaker than strong additivity.

Let $(X_{i})_{i\in I}$ be a family of bornological coarse spaces.

\begin{lem}\label{qijrfoergwregrgrefwg}
If $X_{i}$ has a maximal coarse entourage for every $i$ in $I$, then  we have an equivalence $$ \bigoplus_{i\in I} \HAX(X_{i})\xrightarrow{\simeq} \HAX\big(\bigsqcup^{\free}_{i\in I}X_{i}\big)\, .$$
 \end{lem}
 
\begin{proof}
The set of entourages of $X$  of the form
$\bigsqcup_{i\in I}U_{i}$  for families $(U_{i})_{i\in I}$ in $ \prod_{i\in I}\cC_{i}$   (where $\cC_{i}$ denotes the coarse structure of $X_{i}$)
is cofinal in the set  $\cC$ of entourages of $X$. Let $\cB$ denote the bornology of $X$ and let $\cB_{i}$ denote the bornology of $X_{i}$ for every $i$ in $I$. For any $B$ in $\cB$ we have $B\cap X_{i}\in \cB_{i}$ for all $i$ in $I$,  and $B\cap X_{i}=\emptyset$ for all but finitely many $i$ in $I$.  
Consequently, we get the following chain of equivalences:
\begin{eqnarray*}
\HAX(X)&\simeq& \lim_{U\in \cC} \iota \colim_{B\in \cB}  C_{U}(X,B;A)\\&\simeq&
\lim_{(U_{i})_{i\in I}\in \prod_{i\in I}\cC_{i}} \iota \colim_{B\in \cB}  C_{(U_{i})_{i\in I}}(X,B;A)\\&\stackrel{!}{\simeq}& \lim_{(U_{i})_{i\in I}\in \prod_{i\in I}\cC_{i}} 
\iota \colim_{B\in \cB}  \bigoplus_{i\in I} C_{U_{i} }(X_{i},B\cap X_{i};A)\\&\simeq&\lim_{(U_{i})_{i\in I}\in \prod_{i\in I}\cC_{i}} 
\iota    \bigoplus_{i\in I} \colim_{B_{i}\in \cB_{i}}C_{ U_{i} }(X_{i},B_{i} ;A)\\&\simeq&
\lim_{(U_{i})_{i\in I}\in \prod_{i\in I}\cC_{i}} 
\iota    \bigoplus_{i\in I} C_{ U_{i}  }(X_{i} ;A)\\
&\simeq&
\lim_{(U_{i})_{i\in I}\in \prod_{i\in I}\cC_{i}} 
    \bigoplus_{i\in I} \iota  C_{ U_{i} }(X_{i} ;A)\\
    &\stackrel{\simeq}{\leftarrow} &   \bigoplus_{i\in I} \lim_{U_{i}\in \cC_{i}}\iota  C_{U_{i}}(X_{i} ;A)\\
    &\simeq&
    \bigoplus_{i\in I}   \HAX(X_{i})
    \end{eqnarray*}
In addition  to  the properties of the bounded subsets of $X$ mentioned above, for the marked equivalence  we also use the fact that  the chain boundary operator does not mix the different coarse components of $X$. 
In order to see that the left-pointing arrow is an equivalence we use the additional assumption of the lemma to replace the limits by the evaluations at the corresponding maximal entourages.
\end{proof}
\begin{rem}\label{rem_HAX_strongly_add}
We have written the argument for the Lemma \ref{qijrfoergwregrgrefwg} in greater detail than necessary in order to locate the problem that appears  if one wants to show strong additivity, i.e., if one drops the additional assumption. Then in order to show that the left-pointing arrow is an equivalence one must 
  distribute a cofiltered limit over an infinite sum \cite[Rem.\,6.28]{buen}, see also the proof of Lemma \ref{wergijowergerfwffr} below. It is not clear that this is alway  possible in $\Ch_{\infty}$. 
\end{rem}

%
%
%


\subsection{Further properties}

If $X$ is a metric space, then Roe \cite{roe_coarse_cohomology} has defined coarse cohomology groups $\HX_{Roe}^{*}(X;A)$. Roe's coarse cohomology groups are defined as the cohomology groups of the complex $\CX_{Roe}(X;A)$ of locally bounded, $A$-valued Borel functions on the simplicial space
$\hat X$ whose restrictions to $\hat X_{U_{r}}$ have bounded support for every entourage $U_{r} \coloneqq \{x,y \in X \ | \ d(x,y) < r\}$. In our notation  
$$\CX_{Roe}(X;A) \coloneqq \lim_{U\in \cC} C_{U,Roe}(X;A)\, ,$$
where $C_{U,Roe}(X;A)$ is the subcomplex of $C_{U}(X;A)$ of the locally bounded Borel functions. We have a natural morphism
\begin{equation}\label{wkljvwovwevwevwedwedewdewd}
\iota \CX_{Roe}(X;A)\to \HAX(X)\, .
\end{equation}
 
\begin{lem}\label{lem23435tzrgewrt}
If $X$ is a proper metric space, then the morphism \eqref{wkljvwovwevwevwedwedewdewd} is an equivalence.   
\end{lem}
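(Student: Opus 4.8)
The plan is to show that for a proper metric space $X$ the inclusion of subcomplexes $C_{U,Roe}(X;A)\hookrightarrow C_U(X;A)$ becomes an equivalence in $\Ch_\infty$ after applying $\iota$ and passing to the limit over $U\in\cC$. Since $\HAX(X)\simeq\lim_{U\in\cC}\iota C_U(X;A)$ by \eqref{fijewoifjowefwef}, and $\iota\CX_{Roe}(X;A)$ receives a map from $\lim_{U\in\cC}\iota C_{U,Roe}(X;A)$ (which is in fact an equivalence, as $\iota$ commutes with the limit over the directed set $\cC$ — or one simply defines $\CX_{Roe}$ this way), it suffices to prove that for each fixed entourage $U$ containing the diagonal the inclusion $\iota C_{U,Roe}(X;A)\to\iota C_U(X;A)$ is an equivalence, i.e.\ that the inclusion $C_{U,Roe}(X;A)\hookrightarrow C_U(X;A)$ is a quasi-isomorphism. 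Limits of equivalences are equivalences, so this reduces everything to a fixed-$U$ statement about ordinary chain complexes.

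First I would unwind the definitions at fixed $U$. Recall $C_U(X;A)=\colim_{B\in\cB}C_U(X,B;A)$, and $C_{U,Roe}(X;A)$ is the sub-$\Z$-graded-group of functions that are locally bounded Borel and have bounded support on $\hat X_U$; since $X$ is proper, a subset is bounded iff it is relatively compact, so ``locally bounded Borel with bounded support'' is a genuine subcomplex and the colimit over $B$ is again a filtered union. The cokernel $C_U(X;A)/C_{U,Roe}(X;A)$ is, in each degree $n$, the group of finitely supported $A$-valued functions on $\hat X_U[n]=X^{\times(n+1)}$ modulo those which are additionally locally bounded Borel — but \emph{every} finitely supported function on a set is automatically Borel and locally bounded, so actually $C_U(X,B;A)\subseteq C_{U,Roe}(X;A)$ for each bounded $B$: a function supported on a bounded set is supported on a compact set, hence has bounded support, and is Borel. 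Thus the inclusion $C_{U,Roe}(X;A)\hookrightarrow C_U(X;A)$ is in fact an equality, because every element of $C_U(X;A)$ is by definition supported on some bounded $B$, hence lies in $C_{U,Roe}(X;A)$. So the map \eqref{wkljvwovwevwevwedwedewdewd} is even induced by an isomorphism of chain complexes before localizing, and in particular is an equivalence.

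The one point that requires care — and which I expect to be the main (mild) obstacle — is the bookkeeping of \emph{which} finiteness condition is imposed. In Roe's original definition the condition ``bounded support after restriction to each $\hat X_{U_r}$'' is phrased uniformly in the metric, whereas in our formula the support condition is baked into the definition of $C_U(X,B;A)$ separately for each $U$ before taking the colimit over $B$ and the limit over $U$. One must check that the two orders of quantification agree, i.e.\ that $\lim_{U\in\cC}\colim_{B\in\cB}C_{U}(X,B;A)$ computes the same complex as $\{\phi : \phi|_{\hat X_{U_r}}\text{ has bounded support for all }r\}$; this is a cofinality statement using that the entourages $U_r$, $r>0$, are cofinal in the coarse structure $\cC$ of a metric space, together with the fact that for a proper metric space ``bounded'' and ``relatively compact'' coincide so that the Borel and local-boundedness hypotheses are vacuous on finitely supported (a fortiori compactly supported) functions. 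Once this identification is made, the equivalence \eqref{wkljvwovwevwevwedwedewdewd} follows, and applying $\iota$ — which preserves the directed limit over $\cC$ since it is computed degreewise and $\iota$ is exact — finishes the proof.
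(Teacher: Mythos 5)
Your proposal has two genuine gaps, and they are exactly the two points the paper's proof is built around.

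First, your central claim that for a proper metric space every chain supported on a bounded set is ``automatically Borel and locally bounded'', so that $C_{U,Roe}(X;A)=C_{U}(X;A)$, is false unless $X$ is discrete. An element of $C_{U}(X,B;A)$ is an arbitrary $A$-valued function on the (typically uncountable) set of $U$-controlled simplices contained in $B$; for $X=\R$ and $B=[0,1]$ the indicator function of a non-Borel subset lies in $C_{U}(X,B;A)$ but not in Roe's complex. You have implicitly conflated ``bounded support'' with ``finite support'', which only holds when bounded subsets are finite. The paper avoids this by first using coarse invariance of \emph{both} sides of \eqref{wkljvwovwevwevwedwedewdewd} to replace $X$ by a coarsely equivalent locally finite discrete subset (this is where properness of the metric is actually used); only after that reduction are the Borel and local-boundedness conditions vacuous. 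Without some such step your fixed-$U$ comparison does not go through, and it is not clear that the inclusion $C_{U,Roe}(X;A)\hookrightarrow C_{U}(X;A)$ is even a quasi-isomorphism for fixed $U$ on a non-discrete space.

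Second, your assertion that $\iota$ commutes with the limit over $\cC$ ``since it is computed degreewise and $\iota$ is exact'' is not correct: $\iota\CX_{Roe}(X;A)$ is the localization of an \emph{underived} inverse limit, while $\HAX(X)$ is a \emph{derived} limit, and the comparison map between them has a potential $\lim^{1}$ obstruction; exactness of $\iota$ on short exact sequences says nothing about infinite cofiltered limits. The paper closes this gap by restricting (by cofinality) to the tower $(U_{n})_{n\in\nat}$ and observing that the restriction maps $C_{U_{n+1}}(X;A)\to C_{U_{n}}(X;A)$ are surjective, so that the Mittag-Leffler condition holds and the strict limit computes the homotopy limit. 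You need this surjectivity argument (or an equivalent fibrancy statement) to legitimize the interchange of $\iota$ and $\lim_{U}$; as written, that step of your proof is unjustified.
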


\begin{proof}
It suffices to show that \eqref{wkljvwovwevwevwedwedewdewd} induces a quasi-isomorphism. 
Since the domain and the target of this morphism are coarsely invariant functors we can replace $X$ by a locally finite, discrete subset which is coarsely equivalent to $X$.  Furthermore, we can replace the limit over $U $ in $\cC$ by the limit over the family of entourages $U_{n} \coloneqq \{x,y \in X \ | \ d(x,y) < n\}$ indexed by $n\in \nat$.

If $X$ is a locally finite, discrete metric space, then the conditions of being a Borel function  and of being locally bounded are vacuous. In this case the only difference between Roe's complex and our complex is the order of the limit $\lim_{U\in \cC}$ and the localization $\iota$. In Roe's case the limit is not derived.

We now observe that the restriction maps  $C_{U_{n+1}}(X;A)\to C_{U_{n}}(X;A)$ are surjective for all $n$ in $\nat$. 
As explained below this   implies that one can interchange the order of taking the limit and the localization so that the  assertion  of the lemma follows.

In order to see that we can   interchange the order of taking the limit and the localization we model $\Ch_{\infty}$ using the model category structure on $\Ch $ whose weak equivalences are quasi-isomorphisms and whose fibrations are degree-wise
epimorphisms. On $\Fun(\nat^{\op},\Ch)$ we then have a Reedy model category structure in which the diagram  $(C_{U_{n}}(X;A))_{n\in \nat^{\op}}$ is fibrant.  This implies that  the canonical morphism
$$ \iota  \lim_{\nat^{\op}} C_{U_{n}}(X;A)  \xrightarrow{\simeq} \lim_{\nat^{\op}}   \iota C_{U_{n}}(X;A)$$
is an equivalence.
\end{proof}

Therefore our construction extends Roe's coarse cohomology from proper metric spaces to all bornological coarse spaces.

  We now describe a natural pairing with values in $\iota A[0]$ in the sense of Definition \ref{rgiooerg34t34t} between the coarse cohomology $\HAX$ and the coarse homology theory 
with $\Z$-coefficients $\HX^{\hlg}$ from \cite[Def.\ 6.18]{buen}\footnote{In the reference $\HX^{\hlg}$ is denoted by $\HX$.}. 
 
We use the closed symmetric monoidal structure of the $\infty$-category $\Ch_{\infty}$.
The dualizing object (denoted by $C$ in Section \ref{sec124354trew})  is the object
$\iota A[0]$   in $\Ch_{\infty}$, where $A[0]$ in $\Ch$ is the chain complex with $A$ in degree zero. 

 Recall that the coarse homology     of the bornological coarse space $X$ is given by
$$\HX^{\hlg}(X)\simeq \iota \CX^{\hlg}(X)\, ,  $$
where $\CX^{\hlg}(X)$ is the complex of locally finite and controlled chains \cite[Def.~6.17]{buen}.
Let $U$ be a coarse entourage {of $X$.} Then we let
$$\widetilde{\CX}^{\hlg}(X,U) \coloneqq \CX_{U}^{\hlg}(X)$$ be the subcomplex  of $\CX^{\hlg}(X)$ of the locally finite, $U$-controlled chains.
We thus get a functor 
$$\widetilde{\CX}^{\hlg} \colon \BC^{\cC}\to  \Ch\, , \quad (X,U)\mapsto\widetilde{\CX}^{\hlg}(X,U)\, .$$
Furthermore, recall the functor
$$C(A) \colon {(\BC^{\cC})^{\op}}\to  \Ch\, , \quad (X,U)\mapsto C(A)(X,U) \coloneqq C_{U}(X;A)\, .$$
 We first define a  natural transformation of $\Ch$-valued functors {${(\BC^{\cC})^{\op}}\to  \Ch$}
$$\tilde p \colon C(A)\to \underline{\Hom}( {\widetilde{\CX}^{\hlg,\op}},A[0])$$  as follows.
Let $(X,U)$ be an object of $\BC^{\cC}$, fix an $n$ in $\nat$, and let $\phi$ be an element of $C_{A}(X,U)^{n}$.
Then we define the homomorphism $\tilde p_{(X,U)}(\phi)$ in $\Hom(\widetilde{ \CX}^{\hlg,\op}(X,U),A[0])^{n}$
 as the  $\Z$-linear extension of the map which  sends  the simplex $( x_{0},\dots,x_{n})  $ in $\hat X_{U}[n]$ to $\phi((x_{0},\dots,x_{n}))$ and vanishes on simplices of dimensions different from $n$.
 One easily checks that $\tilde p_{(X,U)}$ is a map of chain complexes, and that
 the collection of maps $\tilde p_{(X,U)}$ for all $(X,U)$ in $\BC^{\cC}$ defines a natural transformation of functors $\tilde p$.

The natural transformation $\tilde p$ induces a morphism
$$\iota \tilde  p \colon \iota C(A)\to \iota\underline{\Hom}( {\widetilde{\CX}^{\hlg,\op}},A[0])$$
between functors from $ {(\BC^{\cC})^{\op}}$ to $\Ch_{\infty}$. We derive the  desired pairing
\begin{equation}\label{wqdoi12de}
p \colon \HAX \to D_{\iota A[0]}(\HX^{\hlg})
\end{equation} 
by a right Kan extension of $\iota \tilde  p$
along the forgetful functor \eqref{kfhiuiueiuwhfiuwefewf} from $ {(\BC^{\cC})^{\op}}$ to $ {\BC^{\op}}$. To this end we must check that the domain and target of this extension are the correct functors. 

By Definition \ref{ewkfhweiofwefewf}
the domain of the Kan extension of $\iota \tilde p$ is $\HAX$. We now evaluate the  target on $X$ in $\BC$.
 Using the  objectwise formula for the Kan extension 
this evaluation is given by 
\begin{align*}
\mathclap{
\lim_{U\in \cC}  \iota\underline{\Hom}(\widetilde{\CX}^{\hlg}(X,U),A[0])\simeq  
\lim_{U\in \cC}   \underline{\map}(\iota \widetilde{\CX}^{\hlg}(X,U),\iota A[0])\simeq
  \underline{\map}(\colim_{U\in \cC}\iota \widetilde{\CX}^{\hlg}(X,U),\iota A[0])\, .
}\end{align*}
We now use 
 the chain of equivalences 
$$\colim_{U\in \cC}  \iota \widetilde{\CX}^{\hlg}(X,U) \simeq  \iota \colim_{U\in \cC} \widetilde{\CX}^{\hlg}(X,U)\simeq
\iota \CX^{\hlg}(X)\simeq \HX^{\hlg}(X)\, ,
$$ where the first equivalence follows from the fact that the poset   $\cC$ of entourages  of $X$ is filtered and $\iota$ commutes with filtered colimits.
We therefore obtain the following formula for the target:
$$ \underline{\map}(\HX^{\hlg}(X),\iota A[0])\simeq D_{\iota A[0]}(\HX^{\hlg})(X)\, .$$
The right Kan extension of $\iota \tilde p$ therefore is a morphism as in \eqref{wqdoi12de} and this is the desired pairing.

\section{The coarse cohomology theory \texorpdfstring{$\boldsymbol{Q_{C}}$}{QC}}
\label{secjk34tgfderfdsrf}

\subsection{The construction}
\label{f2h23iorj2r32r32r2}

In this section we introduce the $\bC$-valued coarse cohomology theory $Q_C$ for $C$ an object of  a  complete and cocomplete  stable $\infty$-category $\bC$ which is tensored and cotensored over $\Spc$. This coarse cohomology theory can be thought of as a generalized version of coarse stable cohomotopy {which is the special case $\bC=\Sp$ and the sphere spectrum $C \coloneqq S$}. We prove that $Q_C$ is a strong coarse cohomology theory and in Section \ref{sec_further_props_QC} we discuss a natural pairing with coarse stable homotopy which was introduced in \cite[Def.~6.29]{buen}.

If $X$ is a bornological coarse space and $U$ is a coarse entourage of $X$, then $P_{U}(X)$ denotes the space of probability measures on the discrete measurable space $X$ {with} finite, $U$-bounded support. This space has the structure of a simplicial complex, and it is a (quasi-)metric space with the path (quasi-)metric induced by the spherical metric on the simplices.\footnote{For a quasi-metric we allow infinite distances.}
We actually have a functor
$$\BC^{\cC}\to \Top\, , \quad (X,U)\mapsto P_{U}(X)$$
(see Section \ref{sec243544345} for the definition of $\BC^{\cC}$).
Let 
\begin{equation}\label{wqwfwedqewdqwdq}
\iota \colon \Top\to \Spc
\end{equation}  
be the Dwyer--Kan localization of $\Top$ at the weak equivalences, where $\Spc$ denotes the $\infty$-category of spaces.

Assume that $\bC$ is a   complete and cocomplete $\infty$-category which is tensored and cotensored over $\Spc$. In particular, any  object $C$  of $\bC$ gives rise to a functor
\begin{equation}\label{gjfglkjglkjgergergereg}
C^{(-)} \colon \Spc^{\op}\to \bC\, , \quad A\mapsto C^{A}\, .
\end{equation}
We shall define a functor
$$Q_{C} \colon \BC^{\op}\to \bC$$
whose evaluation on objects is given by
\begin{equation}\label{ewfweojo2p4jrpwfewrfw23fffwefwefwef}
X\mapsto \lim_{U\in \cC} \colim_{B\in \cB} \Fib(C^{\iota P_{U}(X)}\to C^{\iota P_{U}(X\setminus B)})\, .
\end{equation}
 
To this end we consider the category $\BC^{\cC,\cB}$:
\begin{enumerate}
\item An object of $\BC^{\cC,\cB}$ is a triple $(X,U,B)$ of a bornological coarse space $X$, a coarse entourage $U$ of $X$, and a bounded subset $B$ of $X$. 
\item A morphism $f \colon (X^{\prime},U^{\prime},B^{\prime})\to (X,U,B)$ is a morphism of bornological coarse spaces $f \colon X^{\prime}\to X$ such that ${f }(U^{\prime})\subseteq U$ and {$f^{-1}(B) \subseteq B^{\prime}$.}
\end{enumerate}
We have forgetful functors
$$\BC^{\cC,\cB}\stackrel{q}{\to} \BC^{\cC}\stackrel{p}{\to} \BC\, , \quad (X,U,B)\mapsto (X,U)\mapsto X\, .$$
We furthermore have a functor 
$$W \colon \BC^{\cC,\cB}\to \Top^{\Delta^{1}}\, , \quad W(X,U,B) \coloneqq (P_{U}(X\setminus B)\to P_{U}(X))\, .$$
It induces the functor 
$$\tilde Q_{C} \colon (\BC^{\cC,\cB})^{\op}\to \bC\, , \quad \tilde Q_{C}(X,U,B) \coloneqq \Fib(C^{\iota W}) \, .$$
\begin{ddd}\label{wrowpt32432424234234}
We define the functor $Q_{C}$ as the composition of a left and of a right Kan extension {as follows:}
$$\xymatrix{(\BC^{\cC,\cB})^{\op}\ar[d]_-{q}\ar[rr]^-{\tilde Q_{C}}&&\bC
\\(\BC^{\cC})^{\op}\ar[d]_-{p}\ar@{..>}[urr]^{\hat Q_{C}}&\\
\BC^{\op}\ar@{-->}[uurr]_-{Q_{C}}}$$
\end{ddd}

\begin{theorem}\label{regior34t3t34t}
$Q_{C}$ is a {$\bC$-valued} coarse cohomology  theory.
\end{theorem}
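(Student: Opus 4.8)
The plan is to verify directly the four axioms of Definition~\ref{wefoijoi24r34tt}, following the pattern of the four lemmas used for $\HAX$ in Section~\ref{sec243544345}, but with the probability measure spaces $P_{U}(X)$ in place of the \v{C}ech nerves. The alternative, indicated in Remark~\ref{rem876jhgfd5431}, would be to invoke Theorem~\ref{wefoewpfewfewfwef} together with the fact that coarse stable homotopy is a coarse homology theory; I prefer the direct route because the explicit description is reused later (for instance in the proof of Proposition~\ref{ergoiergregege}). The structural input common to all four arguments is that the power functor $C^{(-)}\colon\Spc^{op}\to\bC$ of~\eqref{gjfglkjglkjgergergereg} preserves limits and that $\Fib\colon\bC^{\Delta^{1}}\to\bC$ preserves limits, so that $\tilde Q_{C}$ sends levelwise colimits of arrows of spaces to limits in $\bC$. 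Combined with the pointwise Kan extension formulas in Definition~\ref{wrowpt32432424234234} and a cofinality argument identifying the relevant comma categories with the filtered posets $\cB$ and $\cC$, this first of all produces the formula~\eqref{ewfweojo2p4jrpwfewrfw23fffwefwefwef} for $Q_{C}(X)$, which is the starting point for everything else.

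For $u$-continuity I would argue exactly as in Lemma~\ref{lem24356rtewe}: the coarse structure of $X_{U}$ is $\cC\langle U\rangle$, and a cofinality argument for the family $\{(U,V): V\in\cC\langle U\rangle\}$ inside $\cC$ converts $\lim_{U\in\cC}\lim_{V\in\cC\langle U\rangle}$ into $\lim_{U\in\cC}$, yielding $Q_{C}(X)\simeq\lim_{U\in\cC}Q_{C}(X_{U})$. For coarse invariance the key observation is that $\{0,1\}$ is bounded and carries the maximal structures, so the bounded subsets of $\{0,1\}\otimes X$ are exactly the sets $\{0,1\}\times B$ with $B\in\cB$, and for $\tilde U=\{0,1\}^{2}\times U$ the inclusion of the $0$-slice $P_{U}(X)\hookrightarrow P_{\tilde U}(\{0,1\}\otimes X)$ is a homotopy equivalence, naturally in $(X,U,B)$. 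This gives a levelwise equivalence of the arrows $W(X,U,B)\simeq W(\{0,1\}\otimes X,\tilde U,\{0,1\}\times B)$, which after applying $\tilde Q_{C}$ and both Kan extensions shows that the projection induces an equivalence $Q_{C}(X)\to Q_{C}(\{0,1\}\otimes X)$.

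For vanishing on flasques I would dualize the Eilenberg swindle used in \cite{buen} for coarse stable homotopy: if $X$ is flasque with flasqueness implemented by $f\colon X\to X$, then after enlarging $U$ to the entourage $\bigcup_{n}(f^{n}\times f^{n})(U)$ the map $f$ induces a compatible self-map of the system $(P_{U}(X))$, the maps $(f^{n})_{*}$ are homotopic to the identity because $f$ is close to $\id_{X}$, and since $f^{n}(X)$ eventually leaves every bounded subset these homotopies assemble, on $Q_{C}(X)$, into a relation of exactly the shape~\eqref{fwefklj234or23r23r} used in Lemma~\ref{rgioug0oergergeg}; invoking coarse invariance (already established) to absorb the $Q_{C}(f)$ term then forces $Q_{C}(X)\simeq 0$. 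Finally, for excisiveness, given a complementary pair $(\cY,Z)$ with $\cY=(Y_{i})_{i\in I}$ on $X$, the geometric input is that for every entourage $U$ the square of spaces with corners $P_{U}(X)$, $P_{U}(Z)$, $\colim_{i}P_{U}(Y_{i})$ and $\colim_{i}P_{U}(Z\cap Y_{i})$ is a pushout --- a finite $U$-bounded subset of $X$ not contained in $Z$ meets $X\setminus Z$, hence some $Y_{i}$, hence lies in some $Y_{j}$ by the big family property --- and this pushout is natural in $U$ and compatible with the filtration by bounded subsets. Applying the limit-preserving functors $C^{\iota(-)}$ and $\Fib$, and then $\colim_{B\in\cB}$ and $\lim_{U\in\cC}$, one obtains a morphism of fibre sequences whose outer terms are equivalences (the relative spaces $P_{U}(X)/\colim_{i}P_{U}(Y_{i})$ and $P_{U}(Z)/\colim_{i}P_{U}(Z\cap Y_{i})$ agree), so that the middle square --- which is the excision square for $Q_{C}$ --- is cartesian, exactly as in Lemma~\ref{efiwejfowfjewfwf}.

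I expect excisiveness to be the main obstacle: one must pin down the pushout decomposition of $P_{U}(X)$ along a complementary pair precisely, deal with the thickenings $U[Z]$ and $U[Y_{i}]$ using that $\cY$ is a big family (so the thickened families are cofinal and $Z\cap\cY$ keeps the same pro-equivalence type), and check that all identifications are natural in $U$ and in the bounded subset so that the passage from pushouts of spaces to fibre sequences in $\bC$ survives $\colim_{B\in\cB}$ and $\lim_{U\in\cC}$ --- the same kind of bookkeeping that makes Lemma~\ref{efiwejfowfjewfwf} longer than the other three. The flasqueness swindle is the second most delicate point; $u$-continuity and coarse invariance are routine cofinality and homotopy-equivalence arguments.
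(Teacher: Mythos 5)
Your proposal is correct and follows essentially the same route as the paper, which likewise proves the theorem by verifying the four axioms directly in Lemmas \ref{lemrgfdsfdv}, \ref{lem564343553}, \ref{lem76543215423} and \ref{goerpgergrege}, using the same geometric inputs: cofinality for $u$-continuity, the homotopy equivalence $P_{\tilde U}(\{0,1\}\otimes X)\simeq P_{U}(X)$ for coarse invariance, the fact that $(Y_{i},Z)$ is eventually a $U$-covering (so the square of probability-measure spaces is a pushout along inclusions of subcomplexes) for excision, and the Eilenberg swindle combined with coarse invariance for vanishing on flasques. The only points where the paper is more explicit are the bookkeeping you already flagged: restricting the limit to the indices $i\in I(U)$ for which $(Y_{i},Z)$ is a $U$-covering, and invoking stability of $\bC$ so that the filtered colimit over $\cB$ preserves the cartesian squares.
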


\begin{proof}
{In the following four Lemmas \ref{lemrgfdsfdv}, \ref{lem564343553}, \ref{lem76543215423} and \ref{goerpgergrege} we verify the four axioms from Definition \ref{wefoijoi24r34tt} on coarse cohomology theories.}
\end{proof}

\begin{lem}\label{lemrgfdsfdv}
$Q_{C}$ is $u$-continuous.
\end{lem}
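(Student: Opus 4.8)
The plan is to make the point-wise description \eqref{ewfweojo2p4jrpwfewrfw23fffwefwefwef} of $Q_{C}$ precise and then to run the same cofinality argument that was used for coarse ordinary cohomology in the proof of Lemma~\ref{lem24356rtewe}. First I would unwind Definition~\ref{wrowpt32432424234234}. Write $\hat Q_{C}\colon(\BC^{\cC})^{op}\to\bC$ for the left Kan extension of $\tilde Q_{C}$ along $q^{op}$. The point-wise formula for this left Kan extension, after the usual cofinality reduction of the comma category computing it to the subcategory spanned by the objects $(X,U,B)$ with $B$ in the bornology $\cB$ of $X$ and identity structure morphisms (as in the analogous construction of $\HAX$ in Definition~\ref{ewkfhweiofwefewf}), gives a canonical equivalence
\[
\hat Q_{C}(X,U)\;\simeq\;\colim_{B\in\cB}\Fib\big(C^{\iota P_{U}(X)}\to C^{\iota P_{U}(X\setminus B)}\big)\ .
\]
Applying the point-wise formula for the right Kan extension along $p^{op}$ then yields $Q_{C}(X)\simeq\lim_{U\in\cC}\hat Q_{C}(X,U)$, which is \eqref{ewfweojo2p4jrpwfewrfw23fffwefwefwef}. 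The key observation is that the right-hand side of the last display depends only on the underlying set of $X$, its bornology $\cB$, and the single entourage $U$, because the spaces $P_{U}(X)$ and $P_{U}(X\setminus B)$ together with the relevant transition maps only involve those data. Since $X_{U}$ has the same underlying set and the same bornology as $X$, with coarse structure $\cC\langle U\rangle$, we thus obtain for every $V\in\cC\langle U\rangle$ a canonical equivalence $\hat Q_{C}(X_{U},V)\simeq\hat Q_{C}(X,V)$, compatible with the transition maps of the corresponding limits.

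Next I would compute, for a bornological coarse space $X$ with coarse structure $\cC$,
\[\mathclap{
\lim_{U\in\cC}Q_{C}(X_{U})\;\simeq\;\lim_{U\in\cC}\ \lim_{V\in\cC\langle U\rangle}\hat Q_{C}(X_{U},V)\;\simeq\;\lim_{U\in\cC}\ \lim_{V\in\cC\langle U\rangle}\hat Q_{C}(X,V)\;\simeq\;\lim_{U\in\cC}\hat Q_{C}(X,U)\;\simeq\;Q_{C}(X)\ ,
}\]
where the first equivalence is the point-wise right Kan extension formula applied to $X_{U}$ (whose coarse structure is $\cC\langle U\rangle$), the second is the equivalence of the previous paragraph, the third is the cofinality of the projection $(U,V)\mapsto V$ from the Grothendieck construction of $U\mapsto\cC\langle U\rangle$ to $\cC$ (each $V\in\cC$ admitting the essentially coinitial index $(V,V)$, since $V\in\cC\langle V\rangle$), and the last equivalence is again the point-wise formula for $Q_{C}$. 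One checks in a routine way that this chain of equivalences is the natural comparison morphism $Q_{C}(X)\to\lim_{U\in\cC}Q_{C}(X_{U})$ induced by the morphisms $X_{U}\to X$; hence $Q_{C}$ is $u$-continuous.

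The only delicate bookkeeping lies in the first step: justifying that the left Kan extension along $q^{op}$ really computes the colimit over $\cB$ (the standard cofinality manipulation already used for $\HAX$), and that the identification $\hat Q_{C}(X_{U},V)\simeq\hat Q_{C}(X,V)$ is legitimate because the diagram defining $\hat Q_{C}(-,V)$ sees only the spaces $P_{V}(-)$ and the bornology, neither of which changes when $\cC$ is replaced by $\cC\langle U\rangle$. Granting this, the remaining iterated-limit cofinality argument is of exactly the same type as in the proof of Lemma~\ref{lem24356rtewe} and presents no difficulty. Alternatively, one could deduce $u$-continuity from the identification $Q_{C}\simeq D_{C}(Q)$ of Remark~\ref{rem876jhgfd5431} together with Theorem~\ref{wefoewpfewfewfwef}, but here we follow the direct route.
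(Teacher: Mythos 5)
Your proposal is correct and follows essentially the same route as the paper: the paper's proof is exactly the chain $Q_{C}(X)\simeq\lim_{U\in\cC}\hat Q_{C}(X,U)\simeq\lim_{U\in\cC}\lim_{n\in\nat}\hat Q_{C}(X,U^{n})\simeq\lim_{U\in\cC}Q_{C}(X_{U})$, using the point-wise Kan extension formula and the (implicit) observation that $\hat Q_{C}(-,V)$ depends only on the underlying set, the bornology and the single entourage $V$. You merely spell out the cofinality bookkeeping that the paper leaves tacit.
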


\begin{proof}
We have
$\hat Q_{C}(X,U)\simeq \colim_{B\in \cB} \tilde Q_{C}(X,U,B)$. Then, {by \eqref{ewfweojo2p4jrpwfewrfw23fffwefwefwef},}
\[Q_{C}(X)\simeq \lim_{U\in \cC} \hat Q_{C}({X,U}) \simeq  \lim_{U\in \cC} \lim_{n\in \nat}\hat  Q_{C}(X,U^{n})\simeq\lim_{U\in \cC}Q_{C}(X_{U})\, .\qedhere\]
\end{proof}

\begin{lem}\label{lem564343553}
$Q_{C}$ is coarsely invariant.
\end{lem}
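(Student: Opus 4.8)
The plan is to reduce coarse invariance of $Q_C$ to a statement about the underlying simplicial complexes $P_U(X)$, exactly as in the proof of Lemma~\ref{oirejgoigergergerg} for $\HAX$. Since $Q_C$ is built by Kan extensions from the functor $\tilde Q_C(X,U,B) = \Fib(C^{\iota W(X,U,B)})$, it suffices to show that the projection $p\colon \{0,1\}\otimes X\to X$ and the inclusion $i\colon X\to \{0,1\}\otimes X$ of the point $0$ induce, for a suitable cofinal system of entourages, mutually inverse equivalences after applying $\tilde Q_C$ and passing to the colimit over $\cB$ and the limit over $\cC$. Concretely, for an entourage $U$ of $X$ let $\tilde U := \{0,1\}^2\times U$ be the corresponding entourage of $\{0,1\}\otimes X$; these $\tilde U$ are cofinal among the entourages of $\{0,1\}\otimes X$, so the limit over $\cC$ can be computed over them.

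First I would record that $p\circ i = \id_X$ and that $i\circ p$ is close to $\id_{\{0,1\}\otimes X}$; in fact, for the space of probability measures, $P_U(p)\colon P_{\tilde U}(\{0,1\}\otimes X)\to P_U(X)$ and $P_{\tilde U}(i)\colon P_U(X)\to P_{\tilde U}(\{0,1\}\otimes X)$ are related by a straight-line homotopy in the simplicial complex $P_{\tilde U}(\{0,1\}\otimes X)$ connecting $P_{\tilde U}(i)\circ P_U(p)$ to the identity (the convex interpolation $t\mapsto (1-t)\mu + t\, (i\circ p)_*\mu$ stays within a single simplex spanned by $\mathrm{supp}(\mu)\cup (i\circ p)(\mathrm{supp}(\mu))$, which is $\tilde U$-bounded since $U$ contains the diagonal). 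This homotopy is compatible with removing bounded subsets: using the morphism condition $f^{-1}(B)\subseteq B'$ in $\BC^{\cC,\cB}$, one gets a matching homotopy on $P_U(X\setminus B)$ for $B\subseteq X$ with $p^{-1}(B) = \{0,1\}\times B$, so the pair of maps in $\Top^{\Delta^1}$ given by $W$ are homotopy-equivalent over $\Delta^1$. Hence $P_{\tilde U}(\iota W)$ is an equivalence in $\Spc^{\Delta^1}$.

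Next, applying the power functor $C^{(-)}\colon \Spc^{op}\to \bC$, which sends equivalences to equivalences, and then $\Fib$, we conclude that $\tilde Q_C(p)\colon \tilde Q_C(X,U,B)\to \tilde Q_C(\{0,1\}\otimes X,\tilde U,\{0,1\}\times B)$ is an equivalence for every $U$ and $B$. Passing to the colimit over $B\in\cB$ (using that $B\mapsto \{0,1\}\times B$ is cofinal in the bornology of $\{0,1\}\otimes X$) gives that $\hat Q_C(p)\colon \hat Q_C(X,U)\to \hat Q_C(\{0,1\}\otimes X,\tilde U)$ is an equivalence, and then taking the limit over the cofinal system $\{\tilde U\}_{U\in\cC}$ yields that $Q_C(p)\colon Q_C(X)\to Q_C(\{0,1\}\otimes X)$ is an equivalence, as required.

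The main obstacle I expect is the bookkeeping around the bounded subsets: one must verify that the homotopy constructed on $P_{\tilde U}(\{0,1\}\otimes X)$ genuinely descends to a homotopy of the pair $W(\{0,1\}\otimes X,\tilde U,-)$ in $\Top^{\Delta^1}$ in a way that is natural enough to survive the colimit over $\cB$, i.e.\ that the convex interpolation never leaves the subcomplex $P_{\tilde U}((\{0,1\}\otimes X)\setminus(\{0,1\}\times B))$ when the original measure is supported there — which holds because $i\circ p$ preserves the complement of $\{0,1\}\times B$. Once this compatibility is in place the rest is formal: equivalences are preserved by $C^{(-)}$, by $\Fib$, and by the filtered colimit and the limit defining $Q_C$. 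Alternatively, one could phrase the whole argument motivically by invoking Corollary~\ref{kor2435tre} and checking that $Q_C^{op}$ is coarsely invariant, but the direct simplicial-complex argument is cleaner here and parallels the treatment of $\HAX$.
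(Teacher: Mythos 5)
Your proposal is correct and follows essentially the same route as the paper: form $\tilde U=\{0,1\}^{2}\times U$ and $\tilde B=\{0,1\}\times B$, observe that the projection $P_{\tilde U}(\{0,1\}\otimes Y)\to P_{U}(Y)$ is a homotopy equivalence (compatibly with removing bounded subsets), deduce that $\tilde Q_{C}(X,U,B)\to\tilde Q_{C}(\{0,1\}\otimes X,\tilde U,\tilde B)$ is an equivalence, and conclude by cofinality of the $\tilde U$ and $\tilde B$ after applying $\lim_{U}\colim_{B}$. The only difference is that you spell out the straight-line homotopy on probability measures, which the paper leaves implicit.
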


\begin{proof}
For a coarse entourage $U$  of $X$ we form the entourage $\tilde U \coloneqq \{0,1\}^{2}\times U$ of $\{0,1\}\times X$.
The projection
$P_{\tilde U}(\{0,1\}\otimes Y)\to P_{U}(Y)$ is a homotopy equivalence for every subset $Y$ of $X$.
For every bounded subset $B$ of $X$ we define the bounded subset $\tilde B \coloneqq \{0,1\}\times B$ of $\{0,1\}\otimes X$.
 Then
$\tilde Q_{C}(X,U,B)\to \tilde Q_{C}(\{0,1\}\otimes X,\tilde U,\tilde B)$ is an equivalence for every $B$ in $\cB$ and 
$U$ in $\cC$. We get an equivalence after applying $\lim_{U\in \cC}\colim_{B\in \cB}$. Since the bounded subsets of the form $\tilde B$ for $B$ in $\cB$ and the entourages of the form $\tilde U$ for $U$ in $\cC$ are cofinal in the bounded subsets or entourages, respectively, of $\{0,1\}\otimes X$ we get the desired equivalence
$Q_{C}(X)\to Q_{C}(\{0,1\}\otimes X)$.  
\end{proof}

\begin{lem}\label{lem76543215423}
$Q_{C}$ is excisive.
\end{lem}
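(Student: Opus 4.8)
The plan is to reduce excision of $Q_{C}$ to a combinatorial excision property of the Rips complexes $P_{U}(-)$, and then to transport it through the functor $C^{\iota(-)}$ and the (co)limits of Definition~\ref{wrowpt32432424234234}. Fix a complementary pair $(\cY,Z)$ on $X$ with $\cY=(Y_{i})_{i\in I}$. Since $\bC$ is presentable and stable, it has all small limits and colimits, filtered colimits and arbitrary limits commute with finite limits (in particular with $\Fib$), and $C^{\iota(-)}:\Spc^{op}\to\bC$ sends homotopy cofibre sequences of pointed spaces to fibre sequences. Using these facts together with the cofibre sequences $P_{U}(Y)_{+}\to P_{U}(X)_{+}\to P_{U}(X)/P_{U}(Y)$ and the defining formula \eqref{ewfweojo2p4jrpwfewrfw23fffwefwefwef}, one rewrites, for any subset $Y\subseteq X$,
$$\Fib\big(\hat Q_{C}(X,U)\to\hat Q_{C}(Y,U)\big)\ \simeq\ \colim_{B\in\cB}\Fib\Big(C^{\iota(P_{U}(X)/P_{U}(Y))}\to C^{\iota(P_{U}(X\setminus B)/P_{U}(Y\setminus B))}\Big)\ ,$$
and similarly with $X$ replaced by any $Y_{i}$ (here $\hat Q_{C}(Y,U)$ denotes the value at the pair obtained by restricting $U$ to $Y$). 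Because $Q_{C}(X)=\lim_{U}\hat Q_{C}(X,U)$, $Q_{C}(\cY)=\lim_{i}Q_{C}(Y_{i})$, $Q_{C}(Z\cap\cY)=\lim_{i}Q_{C}(Z\cap Y_{i})$, and limits commute with $\Fib$ and with each other, the excision square for $Q_{C}$ will be cartesian once we show that for every entourage $U$ the natural map
$$\Fib\big(\hat Q_{C}(X,U)\to\hat Q_{C}(Z,U)\big)\ \longrightarrow\ \lim_{i\in I}\Fib\big(\hat Q_{C}(Y_{i},U)\to\hat Q_{C}(Z\cap Y_{i},U)\big)$$
is an equivalence.

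The geometric input is the following. Pick $i_{0}\in I$ with $Y_{i_{0}}\cup Z=X$ (such $i_{0}$ exists since $(\cY,Z)$ is complementary), and for a given $U$ pick $j\in I$ with $U[Y_{i_{0}}]\subseteq Y_{j}$ (such $j$ exists since $\cY$ is a big family). If $\mu\in P_{U}(X)$ has support $S$ with $S\not\subseteq Z$, then $S$ meets $X\setminus Z\subseteq Y_{i_{0}}$, and since $S$ is $U$-bounded this forces $S\subseteq U[Y_{i_{0}}]\subseteq Y_{j}$. Hence $P_{U}(X)=P_{U}(Z)\cup P_{U}(Y_{j})$ as simplicial complexes, with intersection $P_{U}(Z)\cap P_{U}(Y_{j})=P_{U}(Z\cap Y_{j})$; by the excision homeomorphism $A/(A\cap B)\cong(A\cup B)/B$ for CW-subcomplexes, the inclusion $Y_{j}\hookrightarrow X$ induces a homeomorphism $P_{U}(Y_{j})/P_{U}(Z\cap Y_{j})\xrightarrow{\ \cong\ }P_{U}(X)/P_{U}(Z)$. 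The complementary pair $(\cY,Z)$ restricts to a complementary pair on $X\setminus B$ for every $B\in\cB$, so the same argument gives $P_{U}(Y_{j}\setminus B)/P_{U}((Z\cap Y_{j})\setminus B)\xrightarrow{\ \cong\ }P_{U}(X\setminus B)/P_{U}(Z\setminus B)$ as well.

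For fixed $U$ the admissible indices $j$ form a cofinal subset of $I$, so the two displayed homeomorphisms say exactly that the map of $I$-indexed inductive systems $\{P_{U}(Y_{i})/P_{U}(Z\cap Y_{i})\}_{i}\to\underline{P_{U}(X)/P_{U}(Z)}$, and its $(\,\cdot\setminus B)$-analogue, are isomorphisms of ind-objects. Applying $C^{\iota(-)}$ turns these into isomorphisms of pro-objects; forming the fibre of the induced map from the $X$-level pro-object to the $(X\setminus B)$-level pro-object, and then applying $\colim_{B\in\cB}$, yields that
$$\underline{\Fib\big(\hat Q_{C}(X,U)\to\hat Q_{C}(Z,U)\big)}\ \longrightarrow\ \big\{\Fib\big(\hat Q_{C}(Y_{i},U)\to\hat Q_{C}(Z\cap Y_{i},U)\big)\big\}_{i\in I^{op}}$$
is an isomorphism of pro-objects. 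A pro-isomorphism becomes an equivalence upon applying $\lim_{i\in I}$, which is precisely the equivalence required above; taking $\lim_{U\in\cC}$ then finishes the proof.

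I expect the one genuinely delicate point to be the bookkeeping in this last step: no single pair $(U,j)$ realizes the decomposition $P_{U}(X)=P_{U}(Z)\cup P_{U}(Y_{j})$ uniformly in $U$, so one cannot pass to a cofinal subposet of $\cC\times I$ all at once, and one must instead argue level-by-level in $U$ and use the pro-object formalism, in complete analogy with the proof of excision for $\HAX$ in Lemma~\ref{efiwejfowfjewfwf}. One could also bypass the direct verification entirely: by Remark~\ref{rem876jhgfd5431} we have $Q_{C}\simeq D_{C}(Q)$ with $Q$ the coarse stable homotopy theory of \cite[Def.~6.23]{buen}, which is a coarse homology theory by \cite[Thm.~6.25]{buen}, so excision of $Q_{C}$ follows from Theorem~\ref{wefoewpfewfewfwef}; we give the direct argument since, as noted in Remark~\ref{rem876jhgfd5431}, its details are reused later.
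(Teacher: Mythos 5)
Your argument is correct and is essentially the paper's own proof: the key input in both is that for each entourage $U$ and all sufficiently large $i$ the pair $(Y_i,Z)$ is a $U$-covering of $X$, so the Rips complexes form a (homotopy) pushout square of subcomplex inclusions, which $C^{\iota(-)}$ turns into a cartesian square, after which one interchanges the limits and restricts to the $U$-dependent cofinal set of indices $i$. Your repackaging via fibres, quotient complexes and pro-/ind-objects is only a cosmetic variant of the paper's direct manipulation of cartesian squares of the $\tilde Q_{C}(-,U,B)$ followed by reordering $\lim_{U}$, $\lim_{i\in I(U)}$ and $\colim_{B}$.
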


\begin{proof}
Let $\cY \coloneqq (Y_{i})_{i\in I}$ be a big family on $X$ and let $(\cY,Z)$ be a complementary pair. Let $W$ be a subset of $X$. If $i$ is sufficiently large, then $(Y_{i},Z)$ is a $U$-covering of $X$, i.e., every $U$-bounded subset of $X$ is contained in at least one of $Y_{i}$ or $Z$. In this case
$$\xymatrix{P_{U}(W\cap Z\cap Y_{i})\ar[r]\ar[d]&P_{U}(W\cap Z)\ar[d]\\P_{U}(W\cap Y_{i})\ar[r]&P_{U}(W)}$$
is a homotopy cocartesian diagram since it is cocartesian and all maps are inclusions of subcomplexes.
It follows that
$$\xymatrix{C^{\iota P_{U}( W)}\ar[r]\ar[d]&C^{\iota P_{U}(W\cap Z)}\ar[d]\\C^{\iota P_{U}(W\cap Y_{i})}\ar[r]&C^{\iota P_{U}(W\cap Z\cap Y_{i})}}$$
is cartesian, from which we conclude that
$$\xymatrix{\tilde Q_{C}(X,U,B)\ar[r]\ar[d]&\tilde Q_{C}(Z,U,B)\ar[d]\\\tilde  Q_{C}(Y_{i},U,B)\ar[r]&\tilde  Q_{C}(Z\cap Y_{i} ,U,B)}$$
is cartesian.
We apply $\lim_{i\in I}\lim_{U\in \cC} \colim_{B\in \cB}$ and get a square $$\xymatrix{Q_{C}(X)\ar[r]\ar[d]&Q_{C}(Z)\ar[d]\\Q_{C}(\cY)\ar[r]&Q_{C}(Z\cap \cY)}$$
in $\bC$. We can interchange the order of taking the limits, i.e., apply $\lim_{U\in \cC} \lim_{i\in I} \colim_{B\in \cB}$ without changing the result. For every $U$ in $\cC$ let $I(U)$ be the subset of those $i$ in $I$ such that $(Y_{i},Z)$ is a $U$-covering.
By cofinality, we can restrict the limit to $\lim_{U\in \cC} \lim_{i\in I(U)} \colim_{B\in \cB}$.
Then the square above is obtained by applying this operation to a diagram of cartesian squares and,  by  stability of $\bC$ in order to deal with the colimit, is itself cartesian.
\end{proof}

\begin{lem}\label{goerpgergrege}
$Q_{C}$ vanishes on flasques.
\end{lem}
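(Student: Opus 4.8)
The plan is to run an Eilenberg swindle, in direct analogy with the proof of Lemma~\ref{rgioug0oergergeg} for coarse ordinary cohomology. Let $X$ be flasque with flasqueness implemented by a morphism $f\colon X\to X$: thus $f$ is close to $\id_{X}$, for every entourage $U$ of $X$ the set $U_{\infty}:=\bigcup_{k\in\nat}(f^{k}\times f^{k})(U)$ is again an entourage, and for every bounded subset $B$ there is a $k_{B}\in\nat$ with $f^{k}(X)\cap B=\emptyset$ for all $k\ge k_{B}$ (a single such $k$ suffices since $f^{k+1}(X)\subseteq f^{k}(X)$ for every $k$). Note that $U\subseteq U_{\infty}$, so the subfamily $(U_{\infty})_{U\in\cC}$ is cofinal in $\cC$, and that for every $n\in\nat$ the morphism $f^{n}\colon X\to X$ defines a morphism $(X,U,(f^{n})^{-1}(B))\to(X,U_{\infty},B)$ in $\BC^{\cC,\cB}$, because $(f^{n}\times f^{n})(U)\subseteq U_{\infty}$ and $(f^{n})^{-1}(B)\subseteq(f^{n})^{-1}(B)$ trivially; under $\tilde Q_{C}(f^{n})$ this is the map induced by push-forward of measures along $f^{n}$.

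First I would apply $\tilde Q_{C}$ to these morphisms and postcompose with the structure maps into $\hat Q_{C}(X,U)\simeq\colim_{B\in\cB}\tilde Q_{C}(X,U,B)$, obtaining morphisms $g^{B}_{n}\colon\tilde Q_{C}(X,U_{\infty},B)\to\hat Q_{C}(X,U)$ for all $n$. The key observation --- playing the role of ``$\phi$ has bounded support, so $\sum_{n}f^{n,*}\phi$ is finite'' in Lemma~\ref{rgioug0oergergeg} --- is that $g^{B}_{n}\simeq 0$ for $n\ge k_{B}$: then $(f^{n})^{-1}(B)=\emptyset$ and $\tilde Q_{C}(X,U,\emptyset)=\Fib\big(C^{\iota P_{U}(X)}\xrightarrow{\id}C^{\iota P_{U}(X)}\big)\simeq 0$, through which $g^{B}_{n}$ factors. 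Hence $S_{U,B}:=\sum_{n\in\nat}g^{B}_{n}=\sum_{n=0}^{k_{B}-1}g^{B}_{n}$ is a finite, well-defined sum. I would then check that the $S_{U,B}$ are compatible with enlarging $B$, so that they assemble to a map $S_{U}\colon\hat Q_{C}(X,U_{\infty})\to\hat Q_{C}(X,U)$, that these are natural in $U$, and that $\lim_{U\in\cC}$ (using cofinality of $(U_{\infty})_{U}$ to identify $\lim_{U}\hat Q_{C}(X,U_{\infty})$ with $Q_{C}(X)$) produces a morphism $\tilde S\colon Q_{C}(X)\to Q_{C}(X)$.

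Finally, exactly as in Lemma~\ref{rgioug0oergergeg}: the relation $f^{n+1}=f^{n}\circ f$ shows that $Q_{C}(f)\circ\tilde S$ is the index-shifted sum $\sum_{n\ge 1}$, the vanishing above kills the boundary term, and the zeroth summand $g^{B}_{0}$ is, after passing to $\lim_{U}$, the canonical structure map, hence $\id$. Combining these yields the swindle identity
\[Q_{C}(f)\circ\tilde S+\id_{Q_{C}(X)}\simeq\tilde S\ .\]
Since $f$ is close to $\id_{X}$ and $Q_{C}$ is coarsely invariant (Lemma~\ref{lem564343553}), we have $Q_{C}(f)\simeq\id_{Q_{C}(X)}$, so $\tilde S+\id_{Q_{C}(X)}\simeq\tilde S$ and therefore $\id_{Q_{C}(X)}\simeq 0$, i.e.\ $Q_{C}(X)\simeq 0$. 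I expect the main difficulty to be the bookkeeping in the second step --- organizing the finite sums $S_{U,B}$ into a single natural transformation $\tilde S$ of $Q_{C}$ while tracking the enlargement of $U$ to $U_{\infty}$ and the replacement of $B$ by $(f^{n})^{-1}(B)$, and then verifying that $\tilde S$ genuinely satisfies the swindle relation. (For $\bC=\Sp$ one can sidestep this by invoking Remark~\ref{rem876jhgfd5431}: there $Q_{C}\simeq D_{C}(Q)$ with $Q$ the coarse stable homotopy theory of \cite[Def.~6.23]{buen}, which is a coarse homology theory and hence vanishes on flasques, so $D_{C}(Q)$ vanishes on flasques too by Theorem~\ref{wefoewpfewfewfwef}; but a direct argument is needed for general presentable $\bC$.)
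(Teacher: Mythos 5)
Your proposal is correct and follows essentially the same Eilenberg-swindle argument as the paper: the enlarged entourage $U_{\infty}$ is the paper's $\tilde U=\bigcup_{n}f^{n}(U)$, and your finite sums $S_{U,B}$ (using that $g^{B}_{n}\simeq 0$ once $f^{n}(X)\cap B=\emptyset$) are exactly what the paper encodes in its tower of partial sums $f^{0,*}+\dots+f^{n-1,*}$ over the subposets $\{B\in\cB : B\cap f^{n}(X)=\emptyset\}$. The concluding swindle identity $Q_{C}(f)\circ\tilde S+\id\simeq\tilde S$ and the appeal to coarse invariance coincide with the paper's \eqref{dqwdqwdqwdqwd} and \eqref{dqwdqwdqwdqwd1}.
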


\begin{proof}
Let $X$ be a flasque bornological coarse space with flasqueness implemented by the morphism $f \colon X\to X$. We write
$$F_{U}(B) \coloneqq \tilde Q_{C}(X,U,B)\, . $$ 
Note that by definition 
$$Q_{C}(X)\simeq \lim_{U\in \cC} \colim_{B\in \cB} F_{U}(B)\, . $$
For an entourage $U$ of $X$ we define
$\tilde U \coloneqq \bigcup_{n\in \nat} f^{n}(U)$.
We then have the diagram
$$\xymatrix{
\colim_{B\in \cB, B \cap f^{1}(X)=\emptyset} F_{\tilde U}(B)\ar[d]\ar[rrr]^-{f^{0,*}}&&& \colim_{B\in \cB} F_{ U}( B)\\
\colim_{B\in \cB, B\cap f^{2}(X)=\emptyset} F_{\tilde U}(B)\ar[rrr]^-{f^{0,*}+f^{1,*}} \ar[d]&&& \colim_{B\in \cB} F_{  U}( B)\ar@{=}[u]\\
\colim_{B\in \cB, B \cap f^{3}(X)=\emptyset} F_{\tilde U}(B)\ar[rrr]^-{f^{0,*}+f^{1,*}+f^{2,*}} \ar[d]&&& \colim_{B\in \cB} F_{U}( B)\ar@{=}[u]\ar@{=}[d]\\
\vdots\ar[d]&&& \vdots \ar@{=}[d]\\
\colim_{B\in \cB} F_{\tilde U}(B)\ar[rrr]^-{s_{U}}&&& \colim_{B\in \cB} F_{U}( B)
}$$
 The squares commute since the composition
 $$ \colim_{B\in \cB, B \cap f^{n-1}(X)=\emptyset} F_{\tilde U}(B)\to  \colim_{B\in \cB, B \cap f^{n}(X)=\emptyset} F_{\tilde U}(B)\xrightarrow{f^{n-1,*}}  \colim_{B\in \cB} F_{ U}( B)$$ has a preferred equivalence to zero.
 The map $s_{U}$ is induced. If $U^{\prime}$ is a second entourage of $X$ such that $U\subseteq U^{\prime}$, then we have a natural commuting diagram
 $$\xymatrix{ \colim_{B\in \cB} F_{\tilde U^{\prime}}(B) \ar[r]^{s_{U^{\prime}}}\ar[d]& \colim_{B\in \cB} F_{  U^{\prime}}(B)\ar[d]\\ \colim_{B\in \cB} F_{\tilde U}(B) \ar[r]^{s_{U}}& \colim_{B\in \cB} F_{  U}(B)}$$
More precisely, one can perform the construction above in diagrams indexed by the poset~$\cC$. The construction then yields an interpretation of the family of morphisms $(s_{U})_{U\in \cC}$ as a morphism between diagrams. By applying $\lim_{U\in \cC}$ we get a morphism
$$s \colon Q_{C}(X)\to Q_{C}(X)\, .$$
By construction it satisfies
\begin{equation}\label{dqwdqwdqwdqwd}
Q_{C}(f)\circ s+\id_{Q_{C}(X)}\simeq s\, .
\end{equation} 
Since $Q_{C}$ is coarsely invariant we conclude that
\begin{equation}\label{dqwdqwdqwdqwd1}
 s+\id_{Q_{C}(X)}\simeq s
 \end{equation}  and therefore
$Q_{C}(X)\simeq 0$.
\end{proof}

\subsection{Further properties of \texorpdfstring{$Q_{C}$}{Q-C}}
\label{sec_further_props_QC}

In the next lemmas we establish that $Q_C$ is strong and strongly additive.  We furthermore describe the natural pairing with coarse stable homotopy.

\begin{lem}
$Q_{C}$ is strong.
\end{lem}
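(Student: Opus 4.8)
The plan is to repeat the argument of Lemma~\ref{goerpgergrege} verbatim up to the point where coarse invariance was invoked, and then to weaken that invocation. Concretely, let $X$ be a weakly flasque bornological coarse space with weak flasqueness implemented by a morphism $f:X\to X$, meaning (recall \cite[Def.~4.17]{equicoarse}) that $\Yo^{s}(f)\simeq \id_{\Yo^{s}(X)}$ together with the usual conditions on the images $f^{n}(X)$ and on the behaviour with respect to entourages. As in the proof of Lemma~\ref{goerpgergrege} we form, for each entourage $U\in\cC$, the enlarged entourage $\tilde U:=\bigcup_{n\in\nat}f^{n}(U)$, set $F_{U}(B):=\tilde Q_{C}(X,U,B)$, and build the same telescope diagram whose squares commute because the composites through $\colim_{B\cap f^{n}(X)=\emptyset}F_{\tilde U}(B)$ come with a preferred nullhomotopy. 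Passing to $\colim_{B\in\cB}$ and then $\lim_{U\in\cC}$ produces the morphism $s:Q_{C}(X)\to Q_{C}(X)$ satisfying the identity
\begin{equation}\label{eq:wfl-strong}
Q_{C}(f)\circ s+\id_{Q_{C}(X)}\simeq s\ .
\end{equation}

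The only place where Lemma~\ref{goerpgergrege} used genuine flasqueness, as opposed to weak flasqueness, was the step passing from \eqref{dqwdqwdqwdqwd} to \eqref{dqwdqwdqwdqwd1}, i.e.\ the replacement of $Q_{C}(f)$ by $\id_{Q_{C}(X)}$. Since we now already know (by Theorem~\ref{regior34t3t34t}) that $Q_{C}$ is a coarse cohomology theory, Corollary~\ref{kor2435zrterwe} lets us regard $Q_{C}$ as a limit-preserving functor $\Sp\cX^{op}\to\bC$, so that $Q_{C}(f)$ depends only on $\Yo^{s}(f)\in\Sp\cX$. The assumption $\Yo^{s}(f)\simeq\id_{\Yo^{s}(X)}$ therefore gives $Q_{C}(f)\simeq\id_{Q_{C}(X)}$, and feeding this into \eqref{eq:wfl-strong} yields $s+\id_{Q_{C}(X)}\simeq s$, whence $\id_{Q_{C}(X)}\simeq 0$ and $Q_{C}(X)\simeq 0$. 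This shows $Q_{C}$ vanishes on weakly flasque spaces, i.e.\ $Q_{C}$ is strong. (Equivalently one could phrase this via Lemma~\ref{lem2435t5rgfder} once the pairing identification of Remark~\ref{rem3452345324} is in place, but the direct route above is cleaner.)

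The main thing to be careful about is not the logical structure, which is a routine weakening of Lemma~\ref{goerpgergrege}, but the bookkeeping in the telescope: one must make sure the conditions defining weak flasqueness are exactly what is needed for the enlarged entourages $\tilde U$ to remain entourages of $X$ and for the subsets $B$ with $B\cap f^{n}(X)=\emptyset$ to be cofinal in $\cB$ in the relevant sense, so that the colimits and limits assembling $s$ behave as in the flasque case. Since these are precisely the structural hypotheses built into \cite[Def.~4.17]{equicoarse}, and they are the same ones used in the analogous homology-theoretic statement, no new difficulty arises here; the argument is essentially formal once Theorem~\ref{regior34t3t34t} is available.
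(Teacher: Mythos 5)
Your proposal is correct and follows the paper's own proof essentially verbatim: repeat the telescope construction from Lemma~\ref{goerpgergrege} to obtain $Q_{C}(f)\circ s+\id\simeq s$, then use that $Q_{C}$ is already known to be a coarse cohomology theory (hence factors through $\Sp\cX$ by Corollary~\ref{kor2435zrterwe}) so that $\Yo^{s}(f)\simeq\id_{\Yo^{s}(X)}$ suffices to conclude $s+\id\simeq s$ and $Q_{C}(X)\simeq 0$. Your additional remarks on why the structural hypotheses of weak flasqueness keep the telescope bookkeeping intact are a fair (and slightly more explicit) elaboration of what the paper leaves implicit.
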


\begin{proof}
Let $X$ be weakly flasque. We repeat the argument for Lemma \ref{goerpgergrege}. Since we already know that $Q_{C}$ is a coarse cohomology theory, in order to see that \eqref{dqwdqwdqwdqwd} implies \eqref{dqwdqwdqwdqwd1} we only need that
$\Yo^{s}(f)\simeq \id_{\Yo^{s}(X)}$.
\end{proof}

Recall the Definition \ref{defn_additivity} of strong additivity.  
\begin{lem}\label{wergijowergerfwffr}  If  $\bC$ has the property that  cofiltered limits distribute over coproducts\footnote{see e.g.  \cite[Rem.\,6.28]{buen}}, then
$Q_{C}$ is strongly additive.
\end{lem}

\begin{proof}
Let $(X_{i})_{i\in I}$ be a family of bornological coarse spaces and \begin{equation}\label{oijgoregerg}
U \coloneqq \bigsqcup_{i\in I}U_{i}
\end{equation}
be an entourage of the free union   \cite[Def.~2.27]{buen} 
$$X \coloneqq \bigsqcup_{i\in I}^{\free} X_{i}\, .$$
Then we have an isomorphism of topological spaces
$$P_{U}(X)\cong \coprod_{i\in I} P_{U_{i}}(X_{i})\, .$$ 
A subset $B$ of  $X$ is   bounded if and only if 
$B_{i} \coloneqq B\cap X_{i} $  is bounded for all $i$ in $I$ and empty for all but finitely many $i$ in $I$.
We conclude that
\[\mathclap{
\tilde Q_{C}(X,U,B)\simeq \Fib(C^{\iota P_{U}(X)}\to C^{\iota P_{U}(X\setminus B)})\simeq \bigoplus_{i\in I}  \Fib(C^{\iota P_{U_{i}}(X_{i})}\to C^{\iota P_{U_{i}}(X_{i}\setminus B_{i})})\simeq \bigoplus_{i\in I}  \tilde Q_{C}(X_{i},U_{i},B_{i})\, .
}\]
We get the equivalence
\begin{align*}
\mathclap{
\hat Q_{C}(X,U)\simeq \colim_{B\in \cB} \tilde Q_{C}(X,U,B) \simeq   \colim_{B\in \cB} \bigoplus_{i\in I} \tilde Q_{C}(X_{i},U_{i},B_{i})  \simeq \bigoplus_{i\in I} \colim_{B_{i}\in \cB_{i}} \tilde Q_{C}(X_{i},U_{i},B_{i})\simeq \bigoplus_{i\in I} \hat Q_{C}(X_{i},U_{i}) \, ,
}
\end{align*}
where $\cB$ is the poset of bounded subsets of $X$.  The subset of entourages of the form \eqref{oijgoregerg}
is cofinal in the coarse structure $\cC$ of $X$. In the definition of $Q_{C}(X)$  we can therefore restrict the limit over  $\cC$ to this  set and  get the equivalence
\begin{align*}
\mathclap{
Q_{C}(X)\simeq \lim_{U\in \cC} \hat Q_{C}(X,U) \simeq \lim_{(U_{i})_{i\in I}\in \prod_{i\in I}\cC_{i}} \bigoplus_{i\in I} \hat Q_{C}(X_{i},U_{i}) \stackrel{!}{\simeq }\bigoplus_{i\in I} \lim_{U_{i}\in \cC_{i}} \hat Q_{C}(X_{i},U_{i}) \simeq \bigoplus_{i\in I} Q_{C}(X_{i})\, .
}
\end{align*}
In the marked equivalence we use the assumption on $\bC$.
One   checks that this equivalence is indeed induced by the collection of morphisms $Q_{C}(X_{i})\to Q_{C}(X)$ for all $i$ in $I$ given by excision for the complementary pair $(X_{i},\{X\setminus X_{i}\})$ on $X$.
\end{proof}

We  now describe   a $C$-valued pairing $$p \colon Q_{C}\to D_{C}(Q^{\hlg})$$ in the sense of Definition \ref{rgiooerg34t34t},
where $Q^{\hlg}(X)$ is the coarse stable homotopy theory of $X$ \cite[Def. 6.29]{buen}.

We first recall the definition of $Q^{\hlg}$.
We start with the functor
$$\BC^{\cC,\cB}\to \Top^{\Delta^{1}}\, , \quad (X,U,B)\mapsto (P_{U}(X\setminus B)\to P_{U}(X))\, .$$ 
We apply the localization functor $\iota \colon \Top\to \Spc$, the stabilization functor  $\Sigma^{\infty}_{+} \colon \Spc\to \Sp$, and finally the cofibre functor in order to    get the functor 
$$\tilde Q^{\hlg} \colon \BC^{\cC,\cB}\to \Sp\, , \quad \tilde Q^{\hlg}(X,U,B)\simeq   \Cofib(\Sigma^{\infty}_{+} \iota P_{U}(X\setminus B)\to \Sigma^{\infty}_{+} \iota  P_{U}(X))\, .$$
Similarly as in Definiton \ref{wrowpt32432424234234}, the coarse homology theory $Q^{\hlg}$ is obtained
as the composition of a right and of a left Kan extension
\[\xymatrix{
\BC^{\cC,\cB}\ar[d]\ar[rr]^-{\tilde Q^{\hlg}}&&{\Sp}\\
\BC^{\cC} \ar[d]\ar@{..>}[urr]^{\hat Q^{\hlg}}&\\
\BC\ar@{-->}[uurr]_-{Q^{\hlg}}
}\]

Since $\bC$ is stable, the power structure  of $\bC$ over $\Spc$ extends to a power structure over $\Sp$.
If we fix  the object $C$ in $\bC$, then in analogy with  \eqref{gjfglkjglkjgergergereg} we have a functor  
\begin{equation}\label{ogjeriogjoregregeg}
C^{(-)} \colon \Sp^{\op}\to  \bC\, ,\quad  W\mapsto C^{W}\, .
\end{equation}
For a space $A$ we have the natural equivalence $C^{A}\simeq C^{\Sigma^{\infty}_{+}A}$.

We now construct the pairing.
We first observe that we have an equivalence of functors
$$\tilde Q_{C}\simeq C^{\tilde Q^{\hlg}} \colon (\BC^{\cC,\cB})^{\op}\to \bC\, .$$
Indeed, for $(X,U,B)$ in $\BC^{\cC,\cB}$ we have the natural equivalences
\begin{eqnarray*}\tilde Q_{C}(X,U,B)&\simeq &\Fib(C^{\iota P_{U}(X)}\to C^{\iota P_{U}(X\setminus B)}) \\&\simeq &\Fib( C^{  \Sigma^{\infty}_{+} \iota P_{U}(X)}\to C^{ \Sigma^{\infty}_{+}\iota  P_{U}(X\setminus B)})\\&\simeq &
C^{\Cofib({\Sigma^{\infty}_{+} \iota P_{U}(X\setminus B)\to \Sigma^{\infty}_{+} \iota  P_{U}(X)})}\, .
\end{eqnarray*}
We now form the left Kan extension of this equivalence along the functor
$$(\BC^{\cC,\cB})^{\op}\to (\BC^{\cC})^{\op}$$ and get  the natural transformation
$$\hat Q_{C}\simeq LK(C^{\tilde Q^{\hlg}})\stackrel{!}{\to} C^{RK({\tilde Q^{\hlg}})}\simeq C^{\hat Q^{\hlg}}\, .$$ 
Here $LK$ and $RK$ stand for the left, resp.\ right Kan extension. In general, the marked transformation
is not an equivalence since the functor \eqref{ogjeriogjoregregeg} in general does not preserve colimits.
 We now form the right Kan extension of this morphism along the functor
 $$(\BC^{\cC})^{\op}\to \BC^{\op}$$
and get the  morphism
\[p \colon Q_{C}\simeq RK(\hat Q_{C})\to RK(C^{\hat Q^{\hlg}})\stackrel{!}{\simeq}C^{LK(\hat Q^{\hlg}})\simeq C^{Q^{\hlg}}\simeq D_{C}(Q^{\hlg})\]
which is the desired pairing. Note that here the marked morphism is an equivalence since \eqref{ogjeriogjoregregeg} preserves limits. 

\section{The dualizing spectrum of a group}
\label{regoiegpergergreg}

 \subsection{Poincar{\'e} duality groups}
 
Let $G$ be a group in $\Spc$, or equivalently, a group-like  $E_{1}$-monoid.
Then we can form its classifying space $BG$ which is a pointed object of  $\Spc $. In fact,   every connected
pointed space $X$ in $\Spc$ is equivalent to  the classifying space  $B(\Omega X) $ of its loop space   $\Omega X$ considered as a group in $\Spc$. 

Considering $BG$ as an $\infty$-groupoid we define the category of $G$-spectra by 
\begin{equation}\label{qewfqewdqwd} G\Sp \coloneqq \Fun(BG,\Sp)\, . 
\end{equation} 
For $E$ in $G\Sp$ we can form the fixed points and orbits
$$E^{G} \coloneqq \lim_{BG}E\, , \quad E_{G} \coloneqq \colim_{BG}E$$ in $\Sp$.

We can form the spherical group ring $S[G] \coloneqq S\wedge G_{+}$ in $(G\times G)\Sp$ using the left and right actions {of} $G$ on itself. Following Klein \cite{klein} we then define the dualizing  spectrum of $G$ by 
$$D_{G} \coloneqq \lim_{BG}S[G]$$
in $G\Sp$, where we use the right action to form the fixed points and keep the residual left action. 
We will write $\underline{ D}_{G}$ in $\Sp$ for the underlying spectrum of $D_{G}$. 

For any  $E$ in $G\Sp$   Klein \cite[Sec.\,3]{klein} introduces a norm map {in $\Sp$}
 $$N^{E} \colon D_{G}\wedge_{G} E\to E^{G}
   $$as  the composition
   \begin{eqnarray}
D_{G}\wedge_{G} E&\stackrel{\deff}{=} &\colim_{BG}  (\lim_{BG} S[G]\wedge  E)\label{gwregwrefwrefwf}\\
 &\to &   \colim_{BG}  \lim_{BG}  (S[G]\wedge  E) \nonumber \\&\to& \lim_{BG}  \colim_{BG}  (S[G]\wedge  E) \nonumber\\&\stackrel{!}{\simeq} & \lim_{BG} E \nonumber\\&\simeq& E^{G}\, . \nonumber
\end{eqnarray}
The marked equivalence uses  $\colim_{BG}(S[G]\wedge  E)\simeq E$ in $G\Sp$ which holds since $S[G]\wedge  E$ is freely induced.
 If $BG$ is a compact object in $\Spc$, using that   $\Sp$ is stable, the limit over $BG$ commutes with $-\wedge E$
and the colimit of $BG$ so that the two arrows in \eqref{gwregwrefwrefwf} are equivalences.
In this case the norm map is an equivalence, compare \cite[Thm.\,D]{klein}.

Recall that homology of $BG$ with coefficients in $D_{G}\wedge E$
and the  cohomology of $BG$ with coefficients in $E$ are defined by
  $$H_{*}(BG,D_G \wedge E) \coloneqq \pi_{*}(D_G\wedge_{G}E) \quad \text{and} \quad H^{*}(BG,E) \coloneqq \pi_{-*}(E^{G})\, .$$
  The norm map induces a map from the homology of $BG$ with coefficients in $D_{G}\wedge E$ to the cohomology with
coefficients in $E$. If the underlying spectrum $\underline{D}_{G}$ is equivalent to a shift $S^{-n}$ of the sphere spectrum and $BG$ is compact in {$\Spc$}, 
then the norm map induces a  Poincar\'e duality isomorphism
$$H_{n-*}(BG,L\wedge E)\xrightarrow{\cong} H^{*}(BG,E)\, ,$$
where $L \coloneqq \Sigma^{n} D_{G}$.
Indeed we have:
\begin{theorem}[{Klein \cite[Thm.\,A]{klein}}]
Assume that $BG$ is compact in $\Spc$. Then the following are equivalent:
\begin{enumerate}
\item $BG$ is a Poincar\'e duality space.
\item $\underline{D}_{G}$ is a shift of the sphere spectrum.
\item $\underline{D}_{G}$ is a finite spectrum. 
\end{enumerate}
\end{theorem}

We now restrict to discrete groups $G$. The importance of the homotopy type  $\underline{D}_{G}$  indicated above
raises the question how it can be calculated and how it depends on the group. Klein \cite[Conj.~on Page~455]{klein} 
conjectured that $\underline{D}_{G}$ only depends on the quasi-isometry class of $G$ with respect to the word metric.

Referring to Section \ref{f2h23iorj2r32r32r2}, we consider the case $\bC=\Sp$ and the sphere spectrum $C \coloneqq S$. By Definition \ref{wrowpt32432424234234} we get a coarse 
cohomology theory
$$Q_{S} \colon \BC^{\op}\to \Sp\, .$$
  By $G_{can,min}$  (see \cite[Ex.\,2.21]{buen}) we denote  the object of  $\BC$    obtained by equipping the   group $G$  with the bornological coarse structure given by the minimal bornology  and the canonical coarse structure.
Our main technical result is now:
\begin{theorem}\label{ergoiergregege}
If $G$ is  {finitely generated and} torsion-free, then we have an equivalence
$\underline{D}_{G}\simeq Q_{S}(G_{can,min})$ in $\Sp$. 
\end{theorem}

%
%
%
%

Thus $\underline{D}_{G}$ is the value of a coarse cohomology theory on $G_{can,min}$  {and therefore we get as} an immediate consequence: 

\begin{kor} \label{werijgowergwerf}
\label{rwfgiowjgogrgergeg}  For  finitely generated and  torsion-free groups $G$
the spectrum $\underline{D}_{G}$ only depends on the coarse motivic spectrum $\Yo^{s}(G_{can,min})$.
In particular, it is an invariant of the quasi-isometry class.
\end{kor}

\begin{rem}
If $BG$ is compact in $\Spc$, then $G$ is finitely generated and torsion-free.
\end{rem}

\begin{ex}\label{weriogwerfwefwerfw}
If $M$ is a  complete, simply connected, negatively curved $n$-dimensional Riemannian manifold $M$, then its  Gromov boundary $ \partial   M$ is homeomorphic to $S^{n-1}$. By 
Higson--Roe \cite[Sec.~8]{hr}
we have a coarse homotopy equivalence between $M$ and the open cone over    $  \partial   M$. 
 By \cite[Sec.\,6]{ass} 
 we know the open cone and the Euclidean cone yield equivalent objects in $\Sp\cX$ after applying $\Yo^{s}$.
Since the Euclidean cone over $S^{n-1}$ is coarsely equivalent to $\R^{n}$ we get 
  $\Yo^{s}(M)\simeq \Yo^{s}(\R^{n})$.

If
 $G$ acts freely and cocompactly  on $M$, then we have coarse equivalence $G_{can,min}\to M$
 given by $g\mapsto g m_{0}$ for any choice of a point $m_{0}$ in $M$. 
 So   we get an equivalence
 $$\Yo^{s}(G_{can,min})\simeq  \Yo^{s}(\R^{n})\stackrel{!}{\simeq} \Sigma^{n}\Yo^{s}(*)\, ,$$ where $!$ is a standard calculation in coarse homotopy theory (see e.g.\,\cite[Ex.~4.9]{buen}).
%
Consequently,  $$Q_{S}(G_{can,min})\simeq Q_{S}(\R^{n})\stackrel{}{\simeq} \Sigma^{-n}Q_{S}(*)\simeq S^{-n}$$ as expected since $BG\simeq M/G$
is an $n$-dimensional Poincar\'e duality space.  \end{ex}

\begin{ex}\label{ex2345treew}
In the following example we provide a pair of groups $G$ and $H$ which are not quasi-isometric, but for which $\Yo^{s}(G_{can,min})\simeq \Yo^{s}(H_{can,min})$. This example shows that the first assertion of Corollary \ref{werijgowergwerf} is strictly stronger than the second.
We consider torsion-free and cocompact lattices $G$ in $SO(2n,1)$ and $H$ in $SU(n,1)$. Such lattices exist by a result of Borel \cite{MR0146301}.
 Then $G$ is quasi-isometric to the hyperbolic space $\mathbb{H}^{2n}$ and $H$ is quasi-isometric to the complex hyperbolic space $\mathbb{H}\C^{n}$ (of real dimension $2n$). 
By Mostow rigidity (Mostow \cite{MR0385004}, Kleiner--Leeb \cite[Cor.~1.1.4]{MR1608566}) $\mathbb{H}^{2n}$ and $\mathbb{H}\C^{n}$ are not quasi-isometric, and hence $G$ and $H$ are not quasi-isometric.
 
The boundaries of the  negatively curved spaces  $\mathbb{H}^{2n}$ and  $\mathbb{H}\C^{n}$ are both homeomorphic to $S^{2n-1}$.  Hence $$\Yo^{s}(G_{can,min})\simeq \Sigma^{2n-1}\Yo^{s}(*)\simeq \Yo^{s}(H_{can,min})$$ by Example \ref{weriogwerfwefwerfw}. \end{ex}

\subsection{{Proof of Theorem \ref{ergoiergregege}}}

Let $\iota \colon \Top\to  \Spc$ denote the canonical functor from topological spaces to {the $\infty$-category of} spaces.
For a group $G$ we denote by $\BG$ the category consisting of one object whose monoid of endomorphisms is given by the group $G$. Note that 
\begin{equation}\label{qefwfwefqwed} 
\iota |\Nerve(\BG)|\simeq BG \, ,
\end{equation}
where 
$|\Nerve(\BG)|$ is the geometric realization of the nerve of $\BG$.

By $\Orb(G)$ we denote the orbit category of $G$ which is the category of transitive $G$-sets and equivariant maps. We form the categories $$G\Top \coloneqq \Fun(\BG,\Top)$$ of $G$-topological spaces (i.e., objects are topological spaces with an action of $G$, and morphisms are equivariant continuous maps) and   
$$G\Spc \coloneqq \Fun(\BG,\Spc)\, , \quad G[\Spc] \coloneqq \Fun(\Orb(G)^{\op},\Spc)$$ of spaces with a $G$-action and {of} $G$-spaces. 
   The category $G[\Spc]$ models the $G$-equivariant homotopy theory and is the natural home for classifying spaces $E_{\cF}G$ for families $\cF$ of subgroups of $G$.
We have a functor  
$\iota_{G} \colon G\Top\to G[\Spc]$ which sends the $G$-topological space
$X$ to the functor
\begin{equation}
\label{eqer54gtrb}
\Orb(G)^{\op}\ni O\mapsto \iota_{G}(X)(O) \coloneqq \iota  \Map_{G}(O,X) \in \Spc\, ,
\end{equation}
where $ \Map_{G}(Y,X)$ denotes the topological space of $G$-equivariant maps from $Y$ to $X$ with the compact-open topology, and $O$ is considered as a discrete $G$-topological space.
   
The category $G\Spc$ models the homotopy theory of topological  spaces with $G$-action and equivariant maps, where weak equivalences are maps which are weak equivalences after forgetting the $G$-action. 
In contrast, by Elmendorfs theorem $\iota_{G}$ models the Dwyer--Kan localization of $G\Top$ at the equivariant weak equivalences.
  We have an isomorphism of monoids $$\End_{\Orb(G)}(G)\cong G^{\op}\, ,$$ and therefore an inclusion
  \begin{equation}\label{foiuehfiuweofewf}
\BG^{\op}\to \Orb(G)\, .
\end{equation}
This inclusion induces an  adjunction
\begin{equation}\label{whgwujvwwef}
\Res :G[\Spc]\leftrightarrows G\Spc:\Coind
\end{equation} relating the two categories.

Furthermore, we let
$$G\Sp \coloneqq \Fun(\BG,\Sp)\, , \quad  G[\Sp] \coloneqq \Fun(\Orb(G)^{\op},\Sp)$$
be the categories of spectra with a $G$-action and of naive $G$-spectra. Since $G$ is discrete and in view of \eqref{qefwfwefqwed} this new definition of $G\Sp$ is equivalent to the former \eqref{qewfqewdqwd}.
  The inclusion \eqref{foiuehfiuweofewf} induces an  adjunction
\begin{equation}\label{bglkjo4bg45g}
\Res :G[\Sp]\leftrightarrows G\Sp:\Coind\, .
\end{equation}

An $\Omega$ spectrum  is a spectrum $(E_{n},\sigma_{n})_{n\in \nat}$ in  topological spaces  such that $\sigma_{n} \colon E_{n}\to \Omega E_{n+1}$ is a weak equivalence. A weak equivalence between $\Omega$-spectra is a morphism which is a level-wise weak equivalence. We denote by $\Sp^{\Omega}$ the ordinary category  of $\Omega$-spectra. The relative category $(\Sp^{\Omega},W)$, where $W$ denotes the class of weak equivalences, is a presentation of the category of spectra. In particular, we have a functor
$$\kappa \colon \Sp^{\Omega}\to \Sp^{\Omega}[W^{-1}]\simeq \Sp\, .$$
 
We furthermore  consider the category  $$G\Sp^{\Omega} \coloneqq \Fun(\BG,\Sp^{\Omega})$$ of $\Omega$-spectra with a $G$-action and use {the} symbol $\kappa$ also for the induced functor
$$\kappa \colon G\Sp^{\Omega}\to  G\Sp\, .$$

We consider the $G$-spectrum $S[G]$ in $G\Sp$, i.e., we only keep the right-action of $G$ on itself which is used to form the limit over $\BG$ later.
  
\begin{rem}\label{efowefpfr23}
In greater detail, $S[G]$ is given by $\coprod_{g\in G} S$,
where the $G$-action is given by the action of $G$ on the index set by right multiplication.
The technical description is
$$S[G] \coloneqq \Ind_{1}^{G}(S)\, ,$$
where $\Ind_{1}^{G} \colon \Sp\to G\Sp$ is the left-adjoint of the
forgetful functor
$G\Sp\to \Sp$.

Equivalently, we can choose an $\Omega$-spectrum $QS$  in $\Sp^{\Omega}$  with $\kappa(QS)\simeq S$.
Then we form the $G$-$\Omega$-spectrum $\Map_{c}(G,QS)$  of compactly supported maps from $G$ to $QS$ (see below for details), where
$G$ is considered as a discrete $G$-space with the right action.
Then we have an equivalence
\begin{equation}\label{rgerergergergreg}
S[G]\simeq \kappa  \Map_{c}(G,QS) \, .
\end{equation}
In order to see this equivalence we first observe that for a finite discrete space $F$ we have an equivalence 
$$S[F]\simeq \coprod_{F} S\simeq \prod_{F} S \simeq \prod_{F} \kappa {(QS)}\simeq \kappa \prod_{F}QS\simeq \kappa \Map(F,QS)\, .$$
This equivalence is functorial for embeddings of finite sets where on the side of the mapping spaces we use extension of maps by zero. 
We then use that
$$ S[G]\simeq \colim_{F\subseteq G} S[F]\simeq  \colim_{F\subseteq G} \kappa \Map(F,QS)\simeq 
\kappa \colim_{F\subseteq G} \Map(F,QS) \simeq \kappa \Map_{c}(G,QS)\, , $$
where the colimit is taken over the finite subsets of $G$, and 
where for the last equivalence we use that extension by zero of functions is level-wise a closed embedding of $\Omega$-spectra.\footnote{We thank Thomas Nikolaus for pointing out that the equivalence \eqref{rgerergergergreg} should be justified.}
\end{rem}

We have a functor $$\lim_{\BG} \colon G\Sp\to \Sp\, ,$$
and, by definition, an equivalence $$\underline{D}_{G}\simeq \lim_{\BG} S[G]\, .$$


Let $X$ be a $G$-topological space and $Z$ a pointed topological space.
For  a subset $K$ of $X$ we let $\Map_{ K}(X
,Z)$ denote the subspace of $\Map (X,Z)$ of maps which send $X\setminus K$ to the base point.
We define the $G$-subset of $\Map (X,Z)$
$$\Map_{ c}(X,Z) \coloneqq \bigcup_{K}\Map_{ K}(X,Z)$$
of compactly supported maps, where $K$ runs over all compact subsets of $X$.
We equip $\Map_{ c}(X,Z)$ with the inductive limit topology. Note that this topology  is  {in general} finer than the induced topology from $\Map (X,Z)$.

Let $X,Y$ be  $G$-topological spaces and $Z$ be a pointed topological space.
\begin{lem}\label{ewfiuwehfiuewfewfewf}
If $G$ acts properly and cocompactly on $X$ and $Y$, then we have a homeomorphism
\begin{equation}
\label{hrthrh45z}
\Map_{G}(X,\Map_{c}(Y,Z)) \cong \Map_{G}(Y,\Map_{c}(X,Z))\, .
\end{equation}
\end{lem}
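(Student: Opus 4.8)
The plan is to exhibit the homeomorphism in \eqref{hrthrh45z} by unwinding the definition of compactly supported maps and using the tensor-hom adjunction at the level of plain (non-equivariant) topological spaces, then cutting down to $G$-fixed points. First I would recall that an element of $\Map_{G}(X,\Map_{c}(Y,Z))$ is a $G$-equivariant continuous map $f\colon X\to \Map_{c}(Y,Z)$, and that by the exponential law for compactly generated spaces this is the same datum as a continuous map $F\colon X\times Y\to Z$ satisfying: (i) $G$-equivariance, i.e.\ $F(gx,gy)=F(x,y)$ for the diagonal action, since $f$ is equivariant and the $G$-action on $\Map_{c}(Y,Z)$ is by precomposition with $g^{-1}$; and (ii) a support condition, namely that for each fixed $x$ the slice $F(x,-)$ is compactly supported on $Y$. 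The symmetric description of $\Map_{G}(Y,\Map_{c}(X,Z))$ gives the same set of maps $F$ but with the support condition imposed slicewise in the $X$-variable instead. The crux is therefore to show that, under the properness and cocompactness hypotheses on the $G$-actions on $X$ and $Y$, these two slicewise support conditions on a $G$-invariant $F$ are equivalent, and moreover that the resulting bijection is a homeomorphism for the inductive-limit topologies.

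The key steps, in order, would be: (1) set up the exponential correspondence $f\leftrightarrow F$ carefully, being explicit that $\Map_{c}$ carries the inductive limit topology over compact exhaustions and noting that properness and cocompactness make $X/G$ and $Y/G$ compact, so that ``compact'' subsets of $X$ (resp.\ $Y$) can be taken to be $G$-translates of a fixed compact ``fundamental'' set; (2) given a $G$-invariant $F\colon X\times Y\to Z$ with $F(x,-)$ compactly supported for every $x$, pick a compact $C_X\subseteq X$ with $G\cdot C_X=X$; the union of the supports of $F(x,-)$ as $x$ ranges over $C_X$ is contained in a single compact subset $C_Y$ of $Y$ (this uses local compactness/continuity plus the compactness of $C_X$ — the supports vary ``continuously'' enough to be uniformly bounded), hence by $G$-invariance $F(x,y)=0$ whenever $y\notin G\cdot C_Y$; (3) conclude that for each fixed $y$, the function $F(-,y)$ vanishes outside $g C_X$ for those $g$ with $g^{-1}y\in C_Y$, and properness of the $G$-action on $Y$ forces this set of $g$ to be finite, so $F(-,y)$ has compact support in $X$; (4) this shows the two support conditions coincide on $G$-invariant $F$, giving a bijection, and a symmetric argument running the roles of $X$ and $Y$ shows the inverse is well-defined; (5) finally check continuity in both directions, which amounts to observing that the bijection respects the filtration by compact supports (a map landing in $\Map_{G\cdot C_Y}(Y,Z)$ corresponds to one landing in $\Map_{G\cdot C_X}(X,Z)$ for suitable $C_X$) and that on each filtration stage the map is the ordinary exponential-law homeomorphism, so the induced map on inductive limits is a homeomorphism.

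The main obstacle I expect is step (2)/(3): promoting the \emph{slicewise} compact support to a \emph{uniform} compact support using properness and cocompactness. One has to argue that although a priori $\mathrm{supp}\, F(x,-)$ could grow as $x$ varies, restricting $x$ to a compact fundamental domain $C_X$ keeps these supports inside one compact set. The clean way to see this is: the set $\{(x,y) : F(x,y)\neq 0\}$ has closure $K$ which need not be compact, but $K\cap(C_X\times Y)$ projects to a subset of $Y$ whose closure I claim is compact — this follows because the projection $C_X\times Y\to Y$ restricted to (the closure of) the support of $F|_{C_X\times Y}$ is a closed map with compact fibers over a compact base is \emph{not} automatic, so instead one should invoke properness of the $G$-action to ensure the support, being $G$-invariant and meeting the ``compact slab'' $C_X\times Y$ in a set that is, fiberwise over $C_X$, compact, is globally controlled. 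Concretely: if it were not compactly supported in the uniform sense one could extract a sequence $y_n\to\infty$ in $Y$ with $x_n\in C_X$ and $F(x_n,y_n)\neq 0$; pass to a convergent subsequence $x_n\to x_\infty\in C_X$; then $F(x_\infty,-)$ fails to be compactly supported by continuity, a contradiction. This compactness/limit argument, together with properness of the action translating ``finitely many translates of $C_Y$ meet a given point'' into the compactness of $\mathrm{supp}\,F(-,y)$, is the heart of the proof; everything else is the standard exponential law plus bookkeeping with inductive-limit topologies.
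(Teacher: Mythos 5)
There is a genuine gap at the heart of your argument, in the passage from slicewise to uniform compact support (your steps (1)--(2)). Your step (1) identifies $\Map_{G}(X,\Map_{c}(Y,Z))$ with the set of $G$-invariant continuous maps $F:X\times Y\to Z$ such that each slice $F(x,-)$ is compactly supported. This characterization is wrong: the exponential law matches continuous maps $X\to \Map(Y,Z)$ (compact-open topology) with continuous maps $X\times Y\to Z$, but $\Map_{c}(Y,Z)$ carries the strictly finer inductive limit topology, and continuity of $f$ into that topology is \emph{more} than slicewise compact support of the adjoint. Concretely, the implication you then try to prove in step (2) --- that slicewise compact support of a $G$-invariant $F$ forces a uniform compact support over a compact fundamental set $C_X$ --- is false. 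For $G=\Z$ acting by translations on $X=Y=\R$, put small bumps (with rapidly shrinking radii $\epsilon_k<1/k$) around the $\Z$-orbit of the points $(1/k,k)$, $k\ge 1$: the resulting $F$ is continuous and $\Z$-invariant, every slice $F(x,-)$ is compactly supported, yet the supports of $F(x,-)$ for $x\in[0,1]$ are not contained in any single compact set. Your sequence argument does not detect this: from $F(x_n,y_n)\neq 0$, $x_n\to x_\infty$ and $y_n\to\infty$ you cannot conclude anything about $F(x_\infty,-)$, since the nonvanishing set need not be lower semicontinuous in $x$ in the way you need.

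The correct source of the uniform bound --- and the point of the inductive limit topology --- is the one the paper uses: since $f:X\to\Map_{c}(Y,Z)$ is continuous for the colimit topology and $C_X$ is compact (cocompactness), the image $f(C_X)$ is a compact subset of $\colim_{L}\Map_{L}(Y,Z)$ and hence lies in a single stage $\Map_{L}(Y,Z)$; equivalently, $\tilde f$ vanishes on $C_X\times(Y\setminus L)$ for one compact $L$. The paper packages this by routing both sides of \eqref{hrthrh45z} through the $G$-fixed points of an auxiliary space $\Map_{d}(X\times Y,Z)=\colim_{(K,L)}\Map_{G(K\times L)}(X\times Y,Z)$ of functions supported on $G$-saturations of products of compacta; going back from such an $F$ to an element of $\Map_{G}(X,\Map_{c}(Y,Z))$ uses properness exactly as in your step (3) (finiteness of $\{g\in G: gK\cap K\neq\emptyset\}$). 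So your use of properness and your continuity bookkeeping in step (5) are fine, but step (2) must be replaced by the compactness argument in the inductive limit; as written, the statement you are trying to prove there is not true.
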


\begin{proof}
We define the $G$-space
$$\Map_{d}(X\times Y,Z) \coloneqq \colim_{(K,L)} \Map_{G(K\times L)}(X\times Y,Z)$$
equipped with the inductive limit topology, where $K$ (or $L$) runs over the compact subsets of $X$ (resp.~of $Y$)  {and $G$ acts diagonally on $X \times Y$.}
We compare both sides of \eqref{hrthrh45z} with $\Map_{d}(X\times Y,Z)^{G}$.  {We carry out the arguments only for the case of $\Map_{G}(X,\Map_{c}(Y,Z)) $ since the other case is completely analogous.}


Assume that $f$ belongs to  $\Map_{G}(X,\Map_{c}(Y,Z))$.
By the exponential law for maps between sets it corresponds to a $G$-equivariant map $\tilde f \colon X\times Y\to Z$ which by $G$-{equi}variance is determined by its restriction to
$K\times Y$ for any compact subset $K$ of $X$ with $GK=X$. Since we equip
$\Map_{c}(Y,Z)$ with the inductive limit topology, there exists a compact subset $L$ of $Y$ with
$\tilde f(k,y)=*$ for all $k\in K$ and $y\in Y\setminus L$. In other words,
$\tilde f\in \Map_{G(K\times L)}(X\times Y,Z)^{G}$.
In this way we define a map
$$ \Map_{G}(X,\Map_{c}(Y,Z))\to  \Map_{d}(X\times Y,Z)^{G}\, .$$

Assume now that
$\tilde f $ belongs to  $\Map_{d}(X\times Y,Z)^{G}$.
Then there is a pair $(K,L)$ of compact subsets of $X$ and $Y$, respectively, such that $\tilde f$ is supported on $G(K\times L)$.
Since $G$ acts properly on $X$, the set
$F \coloneqq \{g \in G\:|\: gK\cap K\not=\emptyset\}$ is finite. Then $\tilde L \coloneqq FL$ is compact and
$\tilde f(x,y)=*$ for $x$ in $K$ and $y\in Y\setminus \tilde L$. Let
$f \colon X\to \Map(Y,Z)$ be the adjoint of $\tilde f$. Then $f_{|K}$ takes values in $\Map_{\tilde L}(Y,Z)$. This shows  that $f\in \Map_{G}(X,\Map_{c}(Y,Z))$.
In this way we have constructed the inverse map
\[\Map_{d}(X\times Y,Z)^{G}\to  \Map_{G}(X,\Map_{c}(Y,Z))\, .\qedhere\]
\end{proof}

If $E=(E_{n},\sigma_{n})_{n\in \nat}$ is a $G$-$\Omega$-spectrum, then for a $G$-topological space $X$  we get a $G$-$\Omega$-spectrum 
$$\Map(X,E) \coloneqq (\Map(X,E_{n}), \sigma^{X}_{n})_{n\in \nat}\, ,$$
where 
$$\sigma^{X}_{n} \colon \Map(X,E_{n})\xrightarrow{\sigma_{n}} \Map(X,\Omega E_{n+1})\cong \Omega \Map(X,E_{n+1})\, .$$
If $X$ is a CW-complex and $E$ is an $\Omega$-spectrum, then we have the equivalence in $\Sp$
\begin{equation}\label{wfroihiofefwefwefewf}
\kappa  \Map(X,E) \simeq (\kappa E)^{ \iota X} \, .
\end{equation}
In order to see this note that both sides are cohomology theories in the CW-complex argument and coincide for $X=*$.

Furthermore, if $X$ is a free $G$-CW-complex and $E$ is a $G$-$\Omega$-spectrum, then we have the equivalence in $\Sp$
\begin{equation}\label{hfiuwehfiuewfhew}
\kappa  \Map_{G}(X,E) \coloneqq \kappa \lim_{\BG} \Map(X,E)\simeq \lim_{\BG}\left[(\kappa E)^{\Res {(\iota_{G} X)}}\right]\,,
\end{equation}
where   $\Res$ is as in \eqref{whgwujvwwef} {and $\iota_G$ as in \eqref{eqer54gtrb}.}
Again, both sides are cohomology theories on free $G$-CW-complexes and coincide on $X=G$.

Similarly, we define the $G$-$\Omega$-spectrum 
$ \Map_{c}(X,E)$ by $({\Map_c}(X,E_{n}), \sigma_{c,n}^{X} )_{n\in \nat}$,
where 
$$\sigma_{c,n}^{X} \colon \Map_{c}(X,E_{n})\xrightarrow{\sigma_{n}} \Map_{c}(X,\Omega E_{n+1})\cong \Omega \Map_{c}(X,E_{n+1})\, .$$
For the last isomorphism  we used  that the circle $S^{1}$ is compact.

We now choose an $\Omega$-spectrum $QS$ representing the sphere spectrum. We consider $G$ as a discrete $G$-space and note that $G$  acts properly and cocompactly on $G$.
By  Remark \ref{efowefpfr23} we have   
$$S[G]\simeq \kappa  \Map_{c}(G,QS)\, .$$
The Rips complex of a $G$-coarse space $X$ with coarse structure $\cC$ is defined by  
$$\Rips(X) \coloneqq \colim_{U\in \cC^{G}}  P_{U}(X)\, ,$$ where
the colimit is interpreted in the category $G\Top$ of $G$-topological spaces. Then by \cite[Lem.\,11.4]{equicoarse} we have an equivalence $$\iota_{G} \Rips(G_{can})  \simeq E_{\Fin}G$$
in $G[\Spc]$.  
Since we assume that $G$ is torsion-free we have an equivalence $E_{\Fin}G\simeq EG$.

Since $G$ is finitely generated the coarse structure $\cC$ of $G_{can}$ is generated by a single invariant entourage $U_{gen}$. Hence we have an equivariant homeomorphism
$$\Rips(G_{can})\cong \colim_{n\in \nat} P_{U_{gen}^{n}}(G )\, .$$

We further observe that $P_{U_{gen}^{n}}( {G})$ is a  {locally finite} $G$-CW-complex, and that the morphisms
$P_{U_{gen}^{n}}( {G})\to P_{U^{n+1}_{gen}}( {G})$   are inclusions of subcomplexes. It follows that the colimit over these inclusions is a homotopy colimit, i.e., that we have the equivalence
$$EG\simeq \iota_{G}  \Rips(G_{can})  \simeq \colim_{n\in \nat} \iota_{G}  P_{U_{gen}^{n}}(G)$$
in $G[\Spc]$. Let
$$(-)^{G} \colon G[\Sp]\to {\Sp}$$
be the evaluation functor at the one-point $G$-set. For a $G$-$\Omega$-spectrum $E$ we have the following chain of equivalences in $\Sp$:
\begin{eqnarray*}
\lim_{\BG} \kappa E&\simeq& (\Coind \ \kappa E)^{G}\\
&\simeq & ((\Coind \ \kappa E)^{EG})^{G}\\
&\simeq&   ((\Coind \ \kappa E)^{  \colim_{n\in \nat} \iota_{G} P_{U_{gen}^{n}}(G)})^{G}\\
&\simeq& \lim_{n\in \nat}  ((\Coind \ \kappa E)^{    \iota_{G} P_{U_{gen}^{n}}(G)})^{G}\\
&\simeq& \lim_{n\in \nat}\lim_{BG}(\kappa E)^{\Res (\iota_{G} P_{U_{gen}^{n}}(G))}\\
&\stackrel{\eqref{hfiuwehfiuewfhew}}{\simeq}& \lim_{n\in \nat} \kappa  \Map_{G}(P_{U_{gen}^{n}}(G ), E)\, .
\end{eqnarray*}
For the first equivalence we interpret $\Coind$ in \eqref{bglkjo4bg45g} as the right Kan extension functor along $\BG\to G\Orb^{\op}$ and use the point-wise formula for the evaluation of the result at the initial object of $G\Orb^{\op}$.  For the  last equivalence we use  the fact that $P_{U_{gen}^{n}}(G )$ is a free $G$-CW-complex since $G$ is torsion-free.

We obtain the equivalence
$$\underline{D}_{G}\simeq \lim_{n\in \nat} {\kappa \Map_{G}(P_{U_{gen}^{n}}(G ),\Map_{c}(G,QS))}\, .$$
Since $G$ acts properly and cocompactly on both $P_{U_{gen}^{n}}(G)$ and $G$, we get by Lemma \ref{ewfiuwehfiuewfewfewf}
\[\underline{D}_{G} \simeq \lim_{n\in \nat}{\kappa \Map_{G}(G,\Map_{c}(P_{U^{n}_{gen}}(G ),QS))  } \simeq \lim_{n\in \nat} \kappa \Map_{c}(P_{U^{n}_{gen}}(G),QS)\, ,\]
where the second equivalence is induced by the evaluation at the identity of $G$.
 We now observe that the subsets of the form
 $\overline{P_{U^{n}_{gen}}(G )\setminus P_{U^{n}_{gen}}(G\setminus B)}$ for all bounded subsets $B$ of $G_{can,min}$ (i.e., finite subsets of $G$)  are cofinal in the set of compact subsets of $P_{ {U^{n}_{gen}}}(G)$. 
 It follows that
 $$\Map_{c}(P_{U^{n}_{gen}}(G),QS) \cong \colim_{B\in \cB} \Fib\big(\Map (P_{U^{n}_{gen}}(G),QS)\to \Map(P_{U^{n}_{gen}}(G\setminus B),QS)\big)\, ,$$
 where $\cB$ denotes the bornology of $G_{can,min}$.
Hence we get
$$\underline{D}_{G}\simeq  \lim_{n\in \nat}   \kappa   \colim_{B\in \cB} \Fib\big(\Map (P_{U^{n}_{gen}}(G),QS)\to \Map(P_{U^{n}_{gen}}(G\setminus B),QS)\big)\,.$$
Let $QS_{k}$ be the $k$'th space of the $\Omega$-spectrum $QS$. For $B$ in $\cB$ we   set
$$F(B)_{k} \coloneqq \Fib\big(\Map (P_{U^{n}_{gen}}(G),QS_{k})\to \Map(P_{U^{n}_{gen}}(G\setminus B),QS_{k})\big)\, .$$ The structure maps 
 of the spectrum $QS$ induce structure maps  $$\sigma(B)_{n} \colon \Sigma F(B)_{k}\to F(B)_{k+1}$$ which turn    $F(B) \coloneqq (F(B)_{k},\sigma(B)_{n})_{k\in \nat}$ into an $\Omega$-spectrum.
 If $B^{\prime}$ is a second element in $\cB$ such that $B\subseteq B^{\prime}$, then the natural morphism
 $F(B)_{k}\to F(B^{\prime})_{k}$ is a closed embedding. Indeed, it is the embedding of the space of $QS_{k}$-valued functions on $P_{U^{n}_{gen}}(G)$ which are constant on the subset $P_{U^{n}_{gen}}(G\setminus B)$ into the space of such functions which are constant on the smaller subset $P_{U^{n}_{gen}}(G\setminus B^{\prime})$. Since taking homotopy groups commutes with 
 filtered colimits of systems of  pointed spaces whose   structure maps are closed embeddings
 we can conclude that $\colim_{B\in \cB} F(B) $ is again an $\Omega$-spectrum, and that this also represents the spectrum
 $\colim_{B\in \cB} \kappa F(B)$.
Hence we can switch the order of $\kappa$ and taking the colimit. Furthermore, because
$P_{U^{n}_{gen}}(G\setminus B)\to P_{U^{n}_{gen}}(G)$ is an inclusion of a {locally finite} subcomplex, the induced map between $\Omega$-spectra is a fibration between $\Omega$-spectra. Therefore $\Fib$ in the formula above can be commuted with $\kappa$. Therefore
$$\underline{D}_{G}\simeq  \lim_{n\in \nat}   \colim_{B\in \cB} \Fib\big(\kappa  \Map (P_{U^{n}_{gen}}(G),QS) \to \kappa \Map(P_{U^{n}_{gen}}(G\setminus B),QS) \big)\ .$$ 
We now use the relation \eqref{wfroihiofefwefwefewf} in order to get the equivalence
$$\underline{D}_{G}\simeq  \lim_{n\in \nat}   \colim_{B\in \cB} \Fib\big( (\kappa QS)^{\iota P_{U^{n}_{gen}}(G)}\to  (\kappa QS)^{\iota P_{U^{n}_{gen}}(G\setminus B)}\big)\, .$$ 
 
 Finally, by cofinality we replace the limit over $\nat$ by the limit over $\cC$. In view of \eqref{ewfweojo2p4jrpwfewrfw23fffwefwefwef} we get the desired equivalence 
 $$\underline{D}_{G}\simeq    Q_{S}(G_{can,min})\, ,$$
which finishes the proof of Theorem~\ref{ergoiergregege}.

\begin{rem}
Let $\Orb_{\cF}(G)$ denote the full subcategory of the orbit category of $G$  of transitive $G$-sets with stabilizers in the family of subgroups $\cF$. We set $$G_{\cF}[\Sp] \coloneqq \Fun(\Orb_{\cF}(G)^{\op},\Sp)\, .$$  
Then for every two families $\cF$ and $\cF^{\prime}$ with $\cF\subseteq \cF^{\prime}$ we have a {corresponding} pair of adjoint  functors $(\Ind_{\cF}^{\cF^{\prime}},\Res^{\cF^{\prime}}_{\cF})$,  see \cite[Sec.\,10.3]{equicoarse}. If  $E\in G_{\All}[\Sp]$, then we define
$$E^{(h_{\cF}G)} \coloneqq \lim_{\Orb_{\cF}(G)} \Res^{\All}_{\cF} E\, .$$
If $H$ is a subgroup of $G$, then we have an induction functor
$$\Ind_{H,\cF}^{G} \colon H_{\cF\cap H}[\Sp]\to G_{\cF}[\Sp]\, .$$
We could consider $S[G]$ as an object $\Ind_{ \{1\},\All}^{G}( S )$ of $G_{\All}[\Sp]$. Then by construction
\[\underline{D}_{G}\simeq S[G]^{(h_{\{1\}}G)}\, .\qedhere\]
\end{rem}

An appropriate modification (with $\BG\simeq \Orb_{\{1\}}(G)$ replaced by $\Orb_{\Fin}(G)$) of the proof of Proposition \ref{ergoiergregege} shows  {the following more general statement:}
\begin{prop}\label{efoiwpfwefwefewfwe}
For every finitely generated group $G$ we have an equivalence
$$S[G]^{(h_{\Fin}G)}\simeq Q_{S}(G_{can,min})\, .$$
\end{prop}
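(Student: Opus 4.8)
The plan is to rerun the proof of Proposition~\ref{ergoiergregege} essentially verbatim, systematically replacing the trivial family $\{1\}$ of subgroups by the family $\Fin$ of finite subgroups: thus $BG\simeq\Orb_{\{1\}}(G)$ is replaced by $\Orb_{\Fin}(G)$, the universal free $G$-space $EG$ by the classifying space $E_{\Fin}G$ for the family of finite subgroups, and the class of free $G$-$CW$-complexes by the class of $G$-$CW$-complexes all of whose cell stabilizers are finite. Since $G$ is finitely generated the coarse structure of $G_{can}$ is still generated by a single invariant entourage $U_{gen}$; now each $P_{U_{gen}^{n}}(G)$ is a locally finite $G$-$CW$-complex with finite (rather than trivial) cell stabilizers, and the maps $P_{U_{gen}^{n}}(G)\to P_{U_{gen}^{n+1}}(G)$ are inclusions of subcomplexes, so the colimit is a homotopy colimit. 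The equivalence $\iota_{G}\Rips(G_{can})\simeq E_{\Fin}G$ of \cite[Lem.~11.4]{equicoarse} is valid for arbitrary $G$ and does not use torsion-freeness, so we still have $E_{\Fin}G\simeq\colim_{n\in\nat}\iota_{G}P_{U_{gen}^{n}}(G)$ in $G_{\Fin}[\Spc]$.

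Unwinding the definition, $S[G]^{(h_{\Fin}G)}=\lim_{\Orb_{\Fin}(G)}\Res^{\All}_{\Fin}\Ind_{\{1\},\All}^{G}(S)$, which — just as $\lim_{BG}\kappa E$ was rewritten as a section space over $EG$ in the original argument — is a section space of $\Res^{\All}_{\Fin}S[G]$ over $E_{\Fin}G$. Since this assignment turns colimits of $G_{\Fin}[\Spc]$-objects into limits, feeding in $E_{\Fin}G\simeq\colim_{n}\iota_{G}P_{U_{gen}^{n}}(G)$ produces a $\lim_{n\in\nat}$ and reduces everything to the $\Fin$-version of~\eqref{hfiuwehfiuewfhew}: for a $G$-$CW$-complex $X$ with finite cell stabilizers and a $G$-$\Omega$-spectrum $E$ there is a natural equivalence between $\kappa\Map_{G}(X,E)$ and the limit over $\Orb_{\Fin}(G)$ of the evident cotensor-type diagram built from $\kappa E$ and $\iota_{G}X$. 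As in the original proof, both sides are cohomology theories in $X$ on $G$-$CW$-complexes with finite stabilizers, so the comparison map is an equivalence as soon as it is so on the orbits $X=G/H$ with $H$ finite. For the relevant spectrum $E=\Map_{c}(G,QS)$ both sides evaluate on $G/H$ to $S[H\backslash G]$: on the left because $\Map_{c}(G,QS)^{H}\cong\Map_{c}(H\backslash G,QS)$, since $H$ acts freely on $G$ by left translation and every $H$-orbit in $G$ is finite; on the right because, $H$-orbitwise, $\Map_{c}(G,QS)$ restricts to $\Coind_{1}^{H}QS$, so that for the finite group $H$ the Wirthm\"uller equivalence $\Ind_{1}^{H}\simeq\Coind_{1}^{H}$ identifies $S[G]^{hH}\simeq(\Ind_{1}^{G}S)^{hH}$ with $S[H\backslash G]$ and simultaneously shows that the strict $H$-fixed points of $\Map_{c}(G,QS)$ model its homotopy fixed points. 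This yields
$$S[G]^{(h_{\Fin}G)}\simeq\lim_{n\in\nat}\kappa\Map_{G}\big(P_{U_{gen}^{n}}(G),\Map_{c}(G,QS)\big)\ .$$

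From here the argument coincides with the one already given for Proposition~\ref{ergoiergregege}. Since $G$ acts properly (finiteness of stabilizers suffices) and cocompactly on both $P_{U_{gen}^{n}}(G)$ and $G$, Lemma~\ref{ewfiuwehfiuewfewfewf} together with evaluation at the identity of $G$ gives $\kappa\Map_{G}(P_{U_{gen}^{n}}(G),\Map_{c}(G,QS))\simeq\kappa\Map_{c}(P_{U_{gen}^{n}}(G),QS)$. The subsets $\overline{P_{U_{gen}^{n}}(G)\setminus P_{U_{gen}^{n}}(G\setminus B)}$ for $B$ a finite subset of $G_{can,min}$ are cofinal among the compact subsets of $P_{U_{gen}^{n}}(G)$, so this rewrites as $\colim_{B\in\cB}$ of the fibres of the evident restriction maps; one then commutes $\kappa$ past this filtered colimit and past the formation of fibres (the restrictions being fibrations of $\Omega$-spectra), uses~\eqref{wfroihiofefwefwefewf} to replace $\kappa\Map(-,QS)$ by the cotensor $(\kappa QS)^{\iota(-)}$, and replaces $\lim_{n\in\nat}$ by the cofinal $\lim_{U\in\cC}$; comparison with the defining formula~\eqref{ewfweojo2p4jrpwfewrfw23fffwefwefwef} then identifies the outcome with $Q_{S}(G_{can,min})$.

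The only genuinely new point, and the main obstacle, is the $\Fin$-version of~\eqref{hfiuwehfiuewfhew}: one must set both sides up as cohomology theories on $G$-$CW$-complexes with finite stabilizers and verify the comparison on the orbits $G/H$, and it is exactly there that the finiteness of the subgroups $H$ enters, via the coincidence of induction and coinduction and hence of strict and homotopy fixed points. Everything else is a transcription of the proof of Proposition~\ref{ergoiergregege} with $\{1\}$ replaced by $\Fin$.
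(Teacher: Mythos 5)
Your proposal is correct and is precisely the ``appropriate modification'' the paper alludes to: the paper offers no further proof of Proposition~\ref{efoiwpfwefwefewfwe} beyond the instruction to replace $\Orb_{\{1\}}(G)$ by $\Orb_{\Fin}(G)$ in the proof of Proposition~\ref{ergoiergregege}, and you carry this out faithfully. You also correctly isolate the one genuinely new ingredient — the $\Fin$-analogue of \eqref{hfiuwehfiuewfhew}, reduced to orbits $G/H$ with $H$ finite and settled there by the coincidence of induction and coinduction (hence of strict and homotopy fixed points) for finite groups — which is exactly where the passage from free to proper actions is absorbed.
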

If $G$ is torsion-free, then we have the equality of families $\Fin=\{1\}$ and Proposition \ref{ergoiergregege} follows from
Proposition \ref{efoiwpfwefwefewfwe}.

\bibliographystyle{amsalpha}
\bibliography{coho}

\end{document}